\newtheorem{theorem}{Theorem}
\newtheorem{lemma}[theorem]{Lemma}
\newtheorem{remark}{Remark}[section]
\newtheorem{assumption}{Assumption}
\begin{document}

\title{Stochastic  Averaging in Discrete Time
and  Its Applications to Extremum Seeking}

\author{Shu-Jun Liu and Miroslav Krstic
\thanks{S. -J. Liu is with  the Department of Mathematics,
Southeast University, Nanjing, China.
        {\tt\small  sjliu@seu.edu.cn.} }%
\thanks{M. Krstic is with the Department of Mechanical and
Aerospace Engineering, University of California, San Diego, La
Jolla, CA 92093-0411, USA,
        {\tt\small krstic@ucsd.edu.}} }
\date{}
\maketitle
\baselineskip=1.6 \normalbaselineskip

\begin{abstract}
We investigate  stochastic averaging theory for locally Lipschitz discrete-time nonlinear systems with stochastic perturbation and its applications to convergence analysis of discrete-time stochastic extremum seeking  algorithms. Firstly, by defining two average systems (one is continuous time, the other is discrete time), we develop discrete-time stochastic averaging theorem  for locally Lipschitz nonlinear systems with stochastic perturbation. Our results only need some simple and applicable conditions, which are easy to verify, and remove a significant restriction present in existing results: global Lipschitzness of the nonlinear vector field. Secondly, we provide a discrete-time stochastic extremum seeking algorithm for a static map, in which measurement noise is considered and an ergodic discrete-time stochastic process is used as the excitation signal. Finally, for discrete-time nonlinear dynamical systems, in which the output equilibrium map has an extremum, we present a discrete-time stochastic extremum seeking scheme and, with a singular perturbation reduction, we prove the stability of the reduced system.
Compared with classical stochastic approximation methods, while the convergence that we prove is in a weaker sense, the conditions of the algorithm  are easy to verify and no requirements (e.g., boundedness) are imposed on the algorithm itself.
 \end{abstract}
\begin{keywords}
Stochastic averaging, extremum seeking, stochastic perturbation
\end{keywords}

\section{Introduction}
The averaging method is a powerful and elegant asymptotic analysis technique for nonlinear time-varying dynamical systems. Its basic idea is to
 approximate the original system (time-varying and
periodic or almost periodic, or randomly perturbed) by a simpler
(average) system (time-invariant, deterministic) or some
approximating diffusion system (a stochastic system simpler than the
original one). Averaging method has received intensive interests in the analysis of nonlinear dynamical systems (\cite{BaiFuSas88,BogMit61,FreWen84,RobSpa86,Sko89,ZhuYan97,Kha02,SkoHopSal02,SanVerMur07}),   adaptive control or adaptive algorithms
 (\cite{SasBod89,SoloKong95}),  and optimization methods (\cite{BenMetPri90,Che03,KusYin03,Spall03}).

Extremum seeking is a non-model based real-time optimization tool and also a method of adaptive control. Since the first proof of the convergence of extremum seeking \cite{KrsWan00}, the research on extremum seeking has triggered considerable interest in the theoretical control community (\cite{TeePop01,ChoKrsAriLee02,TanNesMar06,StaSti09,MoaManBre10,StaSti10,KhoTanManNes13}) and in applied communities (\cite{OuXuSch07,PopJanMagTee06,SchTorXu06}). According the choice of probing signals, the research on the extremum seeking method can be simply classified into two types: deterministic ES method
(\cite{ChoKrsAriLee02,AriKrs03,TanNesMar06,StaSti09,MoaManBre10,StaSti10}) and stochastic ES method (\cite{ManKrs09, LiuKrsTAC10,LiuKrsAuto10}). In the deterministic ES, periodic (sinusoidal) excitation signals are primarily used to probe the nonlinearity and estimate its gradient.  The random trajectory is preferable in some source  tasks where the orthogonality requirements on the elements of the periodic perturbation vector pose an implementation challenge for high dimensional systems.  Thus there is merit in investigating the use of stochastic perturbations within the ES architecture (\cite{LiuKrsTAC10}).

In \cite{LiuKrsTAC10}, we establish a framework of continuous-time stochastic extremum seeking algorithms by developing general stochastic averaging theory
in continuous time. However, there exists a need to consider stochastic extremum seeking in discrete time due to computer implementation.  Discrete-time extremum seeking with stochastic perturbation is investigated without measurement noise in \cite{ManKrs09}, in which the convergence of the algorithm involves strong restrictions on the iteration process. In \cite{StaSti09} and \cite{StaSti10}, discrete-time extremum seeking with sinusoidal perturbation is studied with measurement noise considered and the proof of the convergence is based on the classical idea of stochastic approximation method, in which the boundedness of iteration sequence is assumed to guarantee the convergence of the algorithm.

In this paper, we investigate stochastic averaging  for a
class of discrete-time locally Lipschitz nonlinear systems with stochastic
perturbation and then present discrete-time stochastic extremum seeking algorithm. In the first part, we develop general discrete-time stochastic averaging theory by the following four steps:
(i) we introduce two average systems: one is discrete-time average system, the other is
continuous-time average system; (ii) by a time-scale transformation, we establish a general stochastic averaging principle between the continuous-time average system and the original system in the continuous-time form; (iii) With the help of
the continuous-time average system, we establish stochastic averaging principle between the discrete-time average system and the original system; (iv) we establish
 some related stability theorems for the original system. To the best of our knowledge, this is the first work about discrete-time stochastic averaging for locally Lipschitz nonlinear systems.

 In the second part,
 we investigate general discrete-time stochastic extremum seeking with stochastic perturbation and measurement noise. We supply discrete-time stochastic extremum seeking algorithm for a static map and analyze  stochastic extremum seeking scheme for nonlinear dynamical systems with output equilibrium map. With the help of our developed discrete-time stochastic averaging theory, we  prove the convergence of the algorithms. Unlike in the continuous-time case \cite{LiuKrsTAC10}, in this work we consider the measurement noise, which is assumed to be bounded and ergodic stochastic process. In the classical stochastic approximation method, boundedness condition or other restrictions are imposed on the iteration algorithm itself to achieve the convergence with probability one. In our  stochastic discrete-time algorithm, the convergence condition is only imposed on the cost function or considered systems and is easy to verify, but as a consequence, we obtain a weaker form of convergence. Different from \cite{KhoTanManNes13} in which unified frameworks are proposed for extremum seeking of general nonlinear plants based on a sampled-data control law, we use the averaging method to analyze the stability of estimation error systems and avoid to verify the decaying property with a $\mathcal{K L}$ function of iteration sequence (the output sequence of extremum seeking controller), but we need justify the stability of average system.

The remainder of the paper is organized as follows. In Section \ref{sec-problem}, we give problem formulation of discrete-time stochastic averaging. In Section \ref{sec-resultsofSA} we establish our discrete-time stochastic  averaging theorems, whose proofs are given in the Appendix. In Section \ref{sec-applicationSM} we present stochastic  extremum seeking algorithms for a static map. In Section \ref{sec-application-dynamic}, we give stochastic extremum seeking scheme for dynamical systems and its stability analysis. In Section \ref{sec-conclusion} we offer some concluding remarks.

\paragraph*{Notation} $C_0(\mathbb{R}^n)$ denotes the family of all  continuous functions on $\mathbb{R}^n$ with compact supports. $[x]$ denotes the largest integer less than $x$.

\section{Problem formulation of discrete-time stochastic averaging}\label{sec-problem}
Consider system
\begin{align}\label{sys-1}
X_{k+1}=X_k+\epsilon f(X_k,Y_{k+1}), \ \ k=0,1,2,\ldots,
\end{align}
where $X_k\in \mathbb{R}^n$ is the state, $\{Y_{k}\}\subseteq \mathbb{R}^m$ is a stochastic perturbation  sequence defined on a complete probability space
 $(\Omega,\mathcal{F},P),$ where $\Omega$ is the sample space, $\mathcal{F}$ is the $\sigma$-field, and $P$ is the probability measure. Let $S_Y\subset\mathbb{R}^m$ be the living space of the perturbation process. $\epsilon\in (0,\epsilon_0)$ is a small parameter for some fixed positive constant $\epsilon_0$.

The following assumptions will be considered.¡¡
¡¡

\begin{assumption}\label{local-linear}
 The vector field $f(x,y)$ is a continuous function of
$(x,y)$,  and for any $x\in \mathbb{R}^n$,  it is a bounded
function of $y$. Further it satisfies the locally Lipschitz
condition in
  $x\in \mathbb{R}^n$ uniformly in $y\in S_Y$, i.e.,  for any compact
  subset $D\subset \mathbb{R}^n$, there is
   a constant $k_{D}$ such that for all $x_1,x_2\in D$
   and all $y\in S_Y$,
   \begin{align*}
   |f(x_1,y)-f(x_2,y)|\leq k_{D}\ |x_1-x_2|.
   \end{align*}
\end{assumption}

 \begin{assumption}\label{ergodic}
 The perturbation
 process
$\{Y_k\}$ is ergodic with invariant distribution $\mu$.
\end{assumption}

Under  Assumption \ref{ergodic}, we define two classes of average system of
system (\ref{sys-1}) as follows:
\begin{align}\label{avesys-1}
\mbox{Discrete average system:\ }\ & 
\bar{X}^{\rm d}_{k+1}=\bar{X}^{\rm d}_k+\epsilon \bar{f}(\bar{X}^{\rm d}_k), \ k=0,1,\ldots,\\
\mbox{Continuous average system:\ }\ &
\frac{d\bar{X}^{\rm c}(t)}{dt}=\bar{f}(\bar{X}^{\rm c}(t)),\ t\geq 0,\label{avesys-20}
\end{align}
where $\bar{X}^{\rm d}_0=\bar{X}^{\rm c}(0)=X_0$ and
\begin{align}\label{a-bar}
\bar{f}(x)
\triangleq \hspace{-1mm}\int_{S_Y} f(x,y) \mu(dy)=\hspace{-1mm}\lim_{N\to +\infty}\frac{1}{N+1}\sum_{k=0}^{N}f(x,Y_{k+1}) \mbox{ a.s. }
\end{align}
By Assumption \ref{local-linear}, $f(x,y)$ is bounded with respect to $y$, thus $y\to f(x,y)$ is $\mu$-integrable, so $\bar{f}$ is well defined. Here the definition of discrete average system is different from that in \cite{SoloKong95}, where the average vector field is defined by $\bar{f}(x)\triangleq Ef(x,Y_{k+1})$ (there, the perturbation process
$\{Y_{k+1}\}$ is assumed to be strict stationary). In this paper,  we consider ergodic process as perturbation.
It is easy to find discrete-time ergodic processes, e.g.,
\begin{itemize}
 \item i.i.d random variable sequence;
  \item finite state irreducible and aperiodic Markov process;
  \item $\{Y_i,i=0,1,\ldots,\}$ where $\{Y_t,t\geq 0\}$ is an Ornstein-Uhlenbeck (OU) process. In fact, for any continuous-time ergodic process $\{Y_t,t\geq 0\}$, the subsequence $\{Y_i,i=0,1,\ldots,\}$ is a discrete-time ergodic process.
\end{itemize}

For discrete average system (\ref{avesys-1}), the solution can be obtained by iteration,
thus the existence and uniqueness of the solution can be guaranteed by the local Lipschitzness of nonlinear vector field. For continuous average system (\ref{avesys-20}), $\bar{f}(x)$ is easy to be verified to be locally Lipschitz since $f(x,y)$ is locally Lipschitz in $x$. Thus, there exists a unique solution on $[0,\sigma_{\infty})$, where $\sigma_{\infty}$ is the explosion time. Thus we only need the following assumption.

\begin{assumption}\label{existence-ave2}
 The continuous average system (\ref{avesys-20}) has a
solution on $[0,+\infty)$. \end{assumption}

By  (\ref{sys-1}), we have
 \begin{align}\label{12}
X_{k+1}
=X_0+\epsilon\sum_{i=0}^kf(X_i,Y_{i+1}).
\end{align}
We introduce a new time $t_k= \epsilon k$. Denote $m(t)=\max\{k: t_k\leq t\}$ and define
$X(t)$ as a piecewise constant version of $X_k$, i.e.,
\begin{align}
X(t)=X_k, \mbox{ as } t_k\leq t< t_{k+1},
\end{align}
and $Y(t)$ as
a piecewise constant version of $Y_n$, i.e.,
\begin{align}
Y(t)=Y_k, \mbox{ as } t_k\leq t< t_{k+1}.
\end{align}
Then we can write (\ref{sys-1}) in the following form:
\begin{align}\label{sys-11}
X(t)&=X_0+\epsilon \sum_{k=1}^{m(t)}f(X_{k-1},Y_k)
\end{align}
or as the continuous-time version
\begin{align}\label{sys-2}
X(t)=X_0+ \int_0^t f(X(s),Y(\epsilon+s))ds-\int^t_{t_{m(t)}}f(X(s),Y(\epsilon+s))ds.
\end{align}
Similarly, we can write the discrete average system (\ref{avesys-1}) in the following continuous-time version
\begin{align}\label{avesys-2}
\bar{X}^{\rm d}(t)=X_0+ \int_0^t \bar{f}(\bar{X}^{\rm d}(s))ds-\int^t_{t_{m(t)}}\bar{f}(\bar{X}^{\rm d}(s))ds,
\end{align}
and write the continuous average system (\ref{avesys-20}) by
\begin{align}\label{avesys-2'}
\bar{X}^{\rm c}(t)=X_0+ \int_0^t \bar{f}(\bar{X}^{\rm c}(s))ds,
\end{align}
where $\bar{X}^{\rm d}(t)$ is a piecewise constant version of $\bar{X}^{\rm d}_k$, i.e.,
$\bar{X}^{\rm d}(t)=\bar{X}_k^{\rm d},$ as $t_k\leq t< t_{k+1}$.
We now rewrite the continuous-time version (\ref{sys-2}) of the original system (\ref{sys-1}) as two forms:
\begin{align}\label{sys-2-a}
X(t)=&X_0+ \int_0^t \bar{f}(X(s))ds-\int^t_{t_{m(t)}}\bar{f}(X(s))ds+
R^{(1)}(t,X(\cdot),Y(\epsilon+\cdot)),\\
X(t)=&X_0+ \int_0^t \bar{f}(X(s))ds+R^{(2)}(t,X(\cdot),Y(\epsilon+\cdot)),\label{sys-2-b}
\end{align}
where
\begin{align*}
R^{(1)}(t,X(\cdot),Y(\epsilon+\cdot))&=\int_0^{t_{m(t)}} \left(f(X(s),Y(\epsilon+s))
-\bar{f}(X(s))\right)ds,\\
R^{(2)}(t,X(\cdot),Y(\epsilon+\cdot))&=\int_0^{t} \left(f(X(s),Y(\epsilon+s))-\bar{f}(X(s))\right)ds\cr
&\quad-\int^t_{t_{m(t)}}f(X(s),Y(\epsilon+s))ds.
\end{align*}
Hence we  consider system (\ref{sys-2-a}) as a random perturbation of the continuous-time version (\ref{avesys-2}) of discrete average system (\ref{avesys-1}) and consider system (\ref{sys-2-b}) as a random perturbation of the continuous average system (\ref{avesys-2'}).

To study the solution property of the original system (\ref{sys-1}), we develop discrete-time stochastic averaging principle, i.e., using average systems (\ref{avesys-1}) or (\ref{avesys-20}) to approximate the original system (\ref{sys-1}).

Different to some existing discrete-time stochastic averaging results \cite{SoloKong95},
we consider averaging results under some weaker conditions: (a) the nonlinear vector field is locally Lipschitz; (b) the perturbation process is ergodic without other limitations. Under these weaker conditions, we can obtain weaker approximation results. The main idea is as follows.
First, we use the solution of continuous average system (\ref{avesys-20}) to approximate the solution of continuous-time version (\ref{sys-2}) of the original system (\ref{sys-1}), and then prove that the solution of  the continuous-time version (\ref{avesys-2}) of the discrete average system (\ref{avesys-1}) and the solution of the continuous average system (\ref{avesys-20}) are close to each other as small parameter $\epsilon$ is sufficiently small. Thus we can obtain that the discrete average system (\ref{avesys-1}) can approximate the original system (\ref{sys-1}) by transforming the continuous-time scale back to discrete-time scale. The main idea can be simplified as Fig. \ref{fig-mainidea} ($|\cdot|\to 0$ means the convergence in some probability sense).

\begin{figure}[!htbp]
\centering
\includegraphics*[width=7cm]{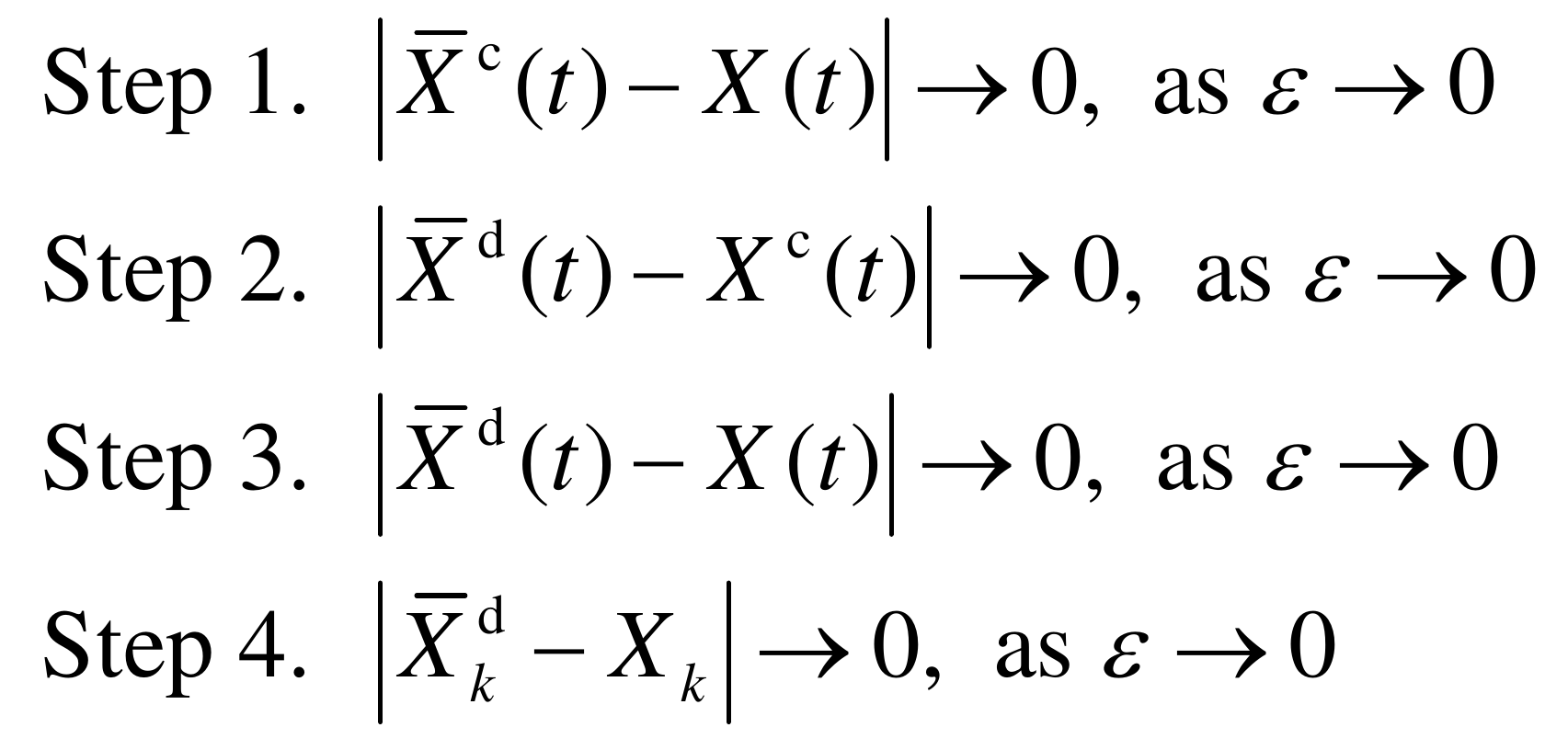}%
\hspace{1cm}%
\caption{Main idea for the proof of discrete-time stochastic averaging.}
 \label{fig-mainidea}
\end{figure}

\begin{remark}\label{remark-noise}
Our developed averaging theory is also applicable to the following systems
\begin{equation}\label{remark-sys1}
X_{k+1}=X_k+\epsilon \left( f(X_k,Y_{k+1})+W_{k+1}\right),\  \  k=0,1,2,\ldots,
\end{equation}
where $\{W_k\}\subseteq \mathbb{R}^n$ is   bounded with a bound $M$ and ergodic stochastic sequence, which is independent of the perturbation sequence $\{Y_{k}\}$.

Take a function $g\in C_0(\mathbb{R})$ such that $g(x)=1,$ for any $x\in B_M(0)=\{x\in\mathbb{R}^n:|x|\leq M\}$ and denote  $F(X_k,Z_{k+1})\triangleq f(X_k,Y_{k+1})+g(W_{k+1})$. Then we obtain the following system
\begin{equation}\label{remark-sys2}
X_{k+1}=X_k+\epsilon F(X_k,Z_{k+1}),\  \  k=0,1,2,\ldots.
\end{equation}
Since $\{W_k\in\mathbb{R}^n\}$ and $\{Y_{k}\}$  are independent and ergodic, we can obtain the combination process $Z_k\triangleq\{(Y_k^T,W_k^T)^T\}$ is also ergodic. It is
 easy to check that  the new system (\ref{remark-sys2}) satisfies Assumption \ref{local-linear}. Thus we know system (\ref{remark-sys1}) is included into our considered system (\ref{sys-1}).
\end{remark}

\section{Statements of General Results on Discrete-time Stochastic Averaging}\label{sec-resultsofSA}
Denote $X(t)$, $\bar{X}^{\rm c}(t)$, $\bar{X}^{\rm d}(t)$,
 as the solutions of system (\ref{sys-11})(the same as (\ref{sys-2})), continuous average system (\ref{avesys-2'}), and continuous-time version (\ref{avesys-2}) of discrete average system (\ref{avesys-1}), respectively. To avoid  too complex  mathematical symbols, we do not manifest $X(t)$ and $\bar{X}^{\rm c}(t)$ to be dependent on
 small parameter $\epsilon$.

 We have the
following results:
\begin{lemma}\label{lemma1}
Consider continuous-time version (\ref{sys-2}) of the original system under Assumptions \ref{local-linear},
\ref{ergodic} and \ref{existence-ave2}. Then
 for any $T>0$, 
\begin{align}\label{lem-a}
\lim_{\epsilon\to 0}\sup_{0\leq t\leq
T}|X(t)-\bar{X}^{\rm c}(t)|=0\ \ \mbox{a.s.}
\end{align}
\end{lemma}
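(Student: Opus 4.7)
\textbf{Proof plan for Lemma \ref{lemma1}.}

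The plan is a Gronwall-type comparison of (\ref{sys-2-b}) with (\ref{avesys-2'}), with two preparatory steps required by the local Lipschitz assumption and the ergodic (as opposed to i.i.d.\ or mixing) nature of $\{Y_k\}$. First, since $\bar{X}^{\rm c}$ exists on $[0,\infty)$ by Assumption \ref{existence-ave2}, choose $R>0$ with $\sup_{0\le t\le T}|\bar{X}^{\rm c}(t)|\le R$, set $D=\bar B_{R+1}(0)$, and introduce the stopping time $\tau_\epsilon=\inf\{t\ge 0:|X(t)|\ge R+1\}$. On $[0,T\wedge\tau_\epsilon]$, both trajectories lie in $D$, so by Assumption \ref{local-linear} the averaged vector field $\bar f$ is Lipschitz on $D$ with some constant $k_D$, and $f$ is bounded on $D\times S_Y$ by some $M_D$ (continuity of $f(x,\cdot)$'s bound in $x$, combined with compactness of $D$).

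Second, subtracting (\ref{avesys-2'}) from (\ref{sys-2-b}) and invoking this Lipschitz bound yields, for $t\le T\wedge\tau_\epsilon$,
\begin{align*}
|X(t)-\bar{X}^{\rm c}(t)|\le k_D\int_0^t|X(s)-\bar{X}^{\rm c}(s)|\,ds+\sup_{0\le s\le T\wedge\tau_\epsilon}|R^{(2)}(s,X(\cdot),Y(\epsilon+\cdot))|.
\end{align*}
The trailing sub-interval piece of $R^{(2)}$ is bounded a.s.\ by $\epsilon M_D\to 0$. The main term $\int_0^t(f(X(s),Y(\epsilon+s))-\bar f(X(s)))\,ds$ is the hard part and requires the following averaging step.

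To handle that term, I partition $[0,T]$ into subintervals of fixed length $\delta$, freeze $X$ at the left endpoint of each subinterval (the oscillation is $O(\delta)$ plus a jump of size $O(\epsilon M_D)$, controlled uniformly on $D$ by local Lipschitzness), and on each subinterval use the change of variable $u=\epsilon+s$ together with Assumption \ref{ergodic} applied to the piecewise-constant $Y(\cdot)$: Birkhoff's pointwise ergodic theorem gives, for each fixed $x$,
\begin{align*}
\frac{1}{N+1}\sum_{k=0}^{N}f(x,Y_{k+1})\longrightarrow\bar f(x)\quad\text{a.s.,}
\end{align*}
which translates into $\tfrac1\Delta\int_a^{a+\Delta}f(x,Y(\epsilon+s))\,ds\to\bar f(x)$ a.s.\ as $\epsilon\to 0$ for every fixed $\Delta>0$ and every $x$. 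Since $f(\cdot,y)$ is equicontinuous on $D$ uniformly in $y\in S_Y$, a finite $\eta$-net covering of $D$ upgrades the pointwise-in-$x$ convergence to convergence uniform in $x\in D$ on the exceptional null set's complement. Combining the freeze-and-compare estimate with this uniform ergodic average, then letting first $\epsilon\to 0$ and subsequently $\delta\to 0$, yields $\sup_{0\le t\le T\wedge\tau_\epsilon}|R^{(2)}(t,\cdot,\cdot)|\to 0$ a.s.

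With this in hand, Gronwall's inequality gives $\sup_{0\le t\le T\wedge\tau_\epsilon}|X(t)-\bar{X}^{\rm c}(t)|\le e^{k_D T}\sup|R^{(2)}|\to 0$ a.s. For almost every $\omega$ and sufficiently small $\epsilon$ the right-hand side is less than $1/2$, forcing $|X(t)|<R+1$ on $[0,T\wedge\tau_\epsilon]$; hence $\tau_\epsilon>T$ for such $\epsilon$, and the bound propagates to all of $[0,T]$, proving (\ref{lem-a}). The main obstacle, as indicated, is turning ergodicity (which a priori delivers only pointwise-in-$x$ time averages along $\{Y_k\}$) into the uniform-in-$x$-on-compacts convergence needed to handle the $x$-dependence of the integrand via the compactness/equicontinuity covering argument.
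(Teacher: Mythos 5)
Your proposal is correct in substance but organizes the key averaging step differently from the paper. The paper does not estimate $R^{(2)}$ along the random trajectory $X(\cdot)$; instead it adds and subtracts $f(\bar{X}^{\rm c}(s),Y(\epsilon+s))$, so that the Lipschitz term absorbs $f(X(s),Y)-f(\bar{X}^{\rm c}(s),Y)$ and the averaging term becomes $\int_0^t[f(\bar{X}^{\rm c}(s),Y(\epsilon+s))-\bar f(\bar{X}^{\rm c}(s))]\,ds$, evaluated along the \emph{deterministic} curve $\bar{X}^{\rm c}$. That curve is then approximated by a deterministic piecewise-constant function $\bar X^n$, so Birkhoff's theorem only needs to be invoked at countably many fixed points $\bar{X}^{\rm c}(k/n)$ (together with a running-supremum refinement from Liptser--Shiryayev), and no uniformity in $x$ is required. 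You instead freeze the random trajectory $X$ on a $\delta$-grid and therefore must upgrade the pointwise ergodic theorem to convergence uniform over $x$ in a compact set; your $\eta$-net plus uniform-in-$y$ Lipschitzness argument does deliver this (it reduces again to countably many fixed points), and since the resulting uniform convergence holds pathwise on an a.s.\ event, substituting the $\epsilon$-dependent random frozen values is legitimate with no adaptedness concerns. So both routes work; the paper's buys a shorter argument by never needing uniformity in $x$, while yours proves a stronger and reusable intermediate fact (a uniform-on-compacts ergodic law for $f(\cdot,Y_k)$) at the cost of an extra limit exchange ($\epsilon\to0$ then $\delta\to0$). One small technical point to patch: with $\tau_\epsilon=\inf\{t:|X(t)|\ge R+1\}$ and $X$ piecewise constant, the state can overshoot $R+1$ by up to $\epsilon M_D$ at $t=\tau_\epsilon$, so the trajectory at the stopping time need not lie in $D=\bar B_{R+1}(0)$; the paper handles this by enlarging the working ball to radius $M+\epsilon_0 C_M$ before fixing the Lipschitz and boundedness constants, and you should do the same (or restrict all estimates to $[0,T\wedge\tau_\epsilon)$ and argue the contradiction at $\tau_\epsilon$ using the enlarged ball).
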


\begin{proof}
See Appendix \ref{sec-pro-lem1}.
\end{proof}

\begin{lemma}\label{lemma2}
Consider continuous-time version (\ref{sys-2}) of the original system under Assumptions \ref{local-linear} and
\ref{ergodic}. Then
 for any $T>0$, 
\begin{align}\label{lem-b}
\lim_{\epsilon\to 0}\sup_{0\leq t\leq
T}|X(t)-\bar{X}^{\rm d}(t)|=0\ \mbox{a.s.}
\end{align}
\end{lemma}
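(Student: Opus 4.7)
The plan is to route the comparison through the continuous-time average trajectory $\bar{X}^{\rm c}(t)$ as an intermediate. By the triangle inequality,
\begin{equation*}
\sup_{0\le t\le T}|X(t)-\bar{X}^{\rm d}(t)|\le \sup_{0\le t\le T}|X(t)-\bar{X}^{\rm c}(t)|+\sup_{0\le t\le T}|\bar{X}^{\rm c}(t)-\bar{X}^{\rm d}(t)|.
\end{equation*}
Lemma \ref{lemma1} already sends the first supremum to zero almost surely (implicitly borrowing Assumption \ref{existence-ave2} so that $\bar{X}^{\rm c}$ is defined on $[0,T]$). Hence the whole task collapses to the \emph{deterministic} claim that the piecewise-constant interpolant $\bar{X}^{\rm d}(t)$ of the forward Euler iterates with step size $\epsilon$ converges uniformly on $[0,T]$ to the ODE trajectory $\bar{X}^{\rm c}(t)$ of $\dot z=\bar f(z)$; no further probabilistic input from the perturbation process is required for this second comparison.

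To bound the Euler error, I would subtract (\ref{avesys-2}) from (\ref{avesys-2'}) to obtain
\begin{equation*}
\bar{X}^{\rm c}(t)-\bar{X}^{\rm d}(t)=\int_0^t\bigl(\bar f(\bar{X}^{\rm c}(s))-\bar f(\bar{X}^{\rm d}(s))\bigr)\,ds+\int_{t_{m(t)}}^t\bar f(\bar{X}^{\rm d}(s))\,ds.
\end{equation*}
The trailing integral spans an interval of length less than $\epsilon$ and so is $O(\epsilon)$ uniformly, provided $\bar f$ stays bounded along $\bar{X}^{\rm d}(\cdot)$. Since $f$ is locally Lipschitz in $x$ uniformly in $y$ (Assumption \ref{local-linear}), $\bar f$ inherits local Lipschitzness; on any compact set containing both trajectories a Lipschitz constant $L$ exists, and Gronwall's inequality applied to the leading integral gives $\sup_{s\le t}|\bar{X}^{\rm c}(s)-\bar{X}^{\rm d}(s)|\le C\epsilon\,e^{LT}\to 0$.

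The main obstacle is securing that common compact set, since the Lipschitz constant depends on the region and no global bound is available. I would bootstrap via a stopping time: pick $R$ with $|\bar{X}^{\rm c}(t)|\le R-1$ on $[0,T]$ (available once Assumption \ref{existence-ave2} is in force), let $\tau_\epsilon=\inf\{t:|\bar{X}^{\rm d}(t)|\ge R\}$, run the Gronwall bound on $[0,T\wedge\tau_\epsilon]$ with Lipschitz constant $L_R$, deduce $|\bar{X}^{\rm d}(t)-\bar{X}^{\rm c}(t)|\to 0$ on that interval, and then conclude $\tau_\epsilon>T$ for all sufficiently small $\epsilon$, which closes the loop and extends the estimate to the whole of $[0,T]$. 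Combining the two uniform limits through the triangle inequality above then yields (\ref{lem-b}).
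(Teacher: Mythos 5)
Your proposal is correct and follows essentially the same route as the paper: reduce via Lemma \ref{lemma1} to the deterministic comparison of $\bar{X}^{\rm d}$ with $\bar{X}^{\rm c}$, subtract (\ref{avesys-2}) from (\ref{avesys-2'}), bound the trailing integral by $O(\epsilon)$, and apply Gronwall on $[0,\tau_\epsilon\wedge T]$ with a stopping time on $|\bar{X}^{\rm d}(t)|$ to secure a common compact set before showing $\tau_\epsilon>T$ for small $\epsilon$. Your remark that Assumption \ref{existence-ave2} is implicitly used (so that $\bar{X}^{\rm c}$ exists on $[0,T]$) applies equally to the paper's own proof, which invokes Lemma \ref{lemma1} and its constants despite the lemma statement omitting that assumption.
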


\begin{proof}
See Appendix \ref{sec-pro-lem2}.
\end{proof}
Lemmas \ref{lemma1} and \ref{lemma2} supply finite-time approximation results in the sense of almost sure convergence. In \cite{GulVer93}, for any $\delta>0$, $\lim_{\epsilon \to 0}$ $P(\sup_{0\leq t\leq T}|X(t)-\bar{X}^{\rm c}(t)|>\delta)$ $=0$ or $\lim_{\epsilon \to 0}$ $E\sup_{0\leq t\leq T}|X(t)-\bar{X}^{\rm c}(t)|$ $=0$ can be achieved, but the nonlinear system is required to be globally Lipschitz. In this work, we present approximation result for locally Lipschitz systems.

\subsection{Approximation and stability results with continuous average system}
In this subsection, we present approximation results to the original system and the related stability results with the help of continuous average system. First, we extend the finite-time approximation result in Lemma~\ref{lemma1} to arbitrarily long time intervals.

\begin{theorem}\label{thm3}
Consider system (\ref{sys-2}) under Assumptions \ref{local-linear},
\ref{ergodic}  and \ref{existence-ave2}. Then

(i) for any
$\delta>0$, 
\begin{align}\label{thm3-a}
\lim_{\epsilon\to 0}\ \inf\{t\geq 0:
|X(t)-\bar{X}^{\rm c}(t)|>\delta\}=+\infty\ \ \mbox{ a.s.}
\end{align}

(ii)
there exists a function $T(\epsilon):(0,\epsilon_0)\to
\mathbb{N}$ such that for any $\delta>0$,
\begin{eqnarray}\label{thm3-b}
\lim_{\epsilon\to 0}\ P\left\{\sup_{0\leq t\leq
T(\epsilon)}|X(t)-\bar{X}^{\rm c}(t)|>\delta\right\}=0
\end{eqnarray}
with
\begin{eqnarray}\label{thm3-c}
     \lim_{\epsilon\to 0}T(\epsilon)=+\infty.
\end{eqnarray}
\end{theorem}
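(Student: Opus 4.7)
The plan is to leverage Lemma \ref{lemma1} directly, since it already delivers the almost sure uniform convergence $\sup_{0\leq t\leq T}|X(t)-\bar{X}^{\rm c}(t)|\to 0$ on every fixed bounded interval $[0,T]$. Both parts of the theorem are then \emph{extension} results: part (i) extends the horizon qualitatively on a.s.\ events, while part (ii) extends it quantitatively via a deterministic $T(\epsilon)$.

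For part (i), I would fix a positive integer $n$ and apply Lemma \ref{lemma1} with $T=n$. On a full-probability event $\Omega_n$, for every $\omega\in\Omega_n$ there exists $\epsilon_n(\omega)>0$ such that $\sup_{0\leq t\leq n}|X(t)-\bar{X}^{\rm c}(t)|<\delta$ whenever $\epsilon<\epsilon_n(\omega)$; this forces $\inf\{t\geq 0:|X(t)-\bar{X}^{\rm c}(t)|>\delta\}\geq n$ for such $\epsilon$. Setting $\Omega_\delta=\bigcap_{n\geq 1}\Omega_n$ yields an event of full probability on which the infimum grows beyond every integer as $\epsilon\to 0$, which is exactly (\ref{thm3-a}). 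A further intersection over the countable family $\delta=1/k$ delivers a single full-probability event on which the conclusion holds for every $\delta>0$ simultaneously.

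For part (ii), I would turn the a.s.\ conclusion of Lemma \ref{lemma1} into convergence in probability (immediate, since a.s.\ convergence implies convergence in probability). Applied with $T=n$ and tolerance $1/n$, this gives
\begin{equation*}
\lim_{\epsilon\to 0} P\left\{\sup_{0\leq t\leq n}|X(t)-\bar{X}^{\rm c}(t)|>\tfrac{1}{n}\right\}=0 \quad \mbox{for each } n\in\mathbb{N}.
\end{equation*}
Hence one may inductively select a strictly decreasing null-sequence $\epsilon_1>\epsilon_2>\cdots\downarrow 0$ such that, for all $\epsilon\leq \epsilon_n$, the probability above is less than $1/n$. Define $T(\epsilon)=n$ for $\epsilon\in(\epsilon_{n+1},\epsilon_n]$. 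By construction $T(\epsilon)\to+\infty$ as $\epsilon\to 0$, establishing (\ref{thm3-c}). For any prescribed $\delta>0$, pick $n_0\geq \max\{1/\delta,1\}$; then whenever $\epsilon\leq \epsilon_{n_0}$ we have $T(\epsilon)=n\geq n_0$, so $1/n\leq \delta$ and
\begin{equation*}
P\left\{\sup_{0\leq t\leq T(\epsilon)}|X(t)-\bar{X}^{\rm c}(t)|>\delta\right\}\leq P\left\{\sup_{0\leq t\leq n}|X(t)-\bar{X}^{\rm c}(t)|>\tfrac{1}{n}\right\}<\tfrac{1}{n},
\end{equation*}
which drives the probability to zero and yields (\ref{thm3-b}).

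The one delicate point is part (ii): the diagonal selection of $\{\epsilon_n\}$ must be arranged so that the single deterministic function $T(\epsilon)$ simultaneously satisfies the growth condition and the probabilistic bound for \emph{every} $\delta>0$. This is handled by coupling the horizon and the tolerance to the common index $1/n$, which lets one $T(\epsilon)$ serve all $\delta$ at once. No new estimates on the averaging error are required beyond what Lemma \ref{lemma1} already provides; the content of Theorem \ref{thm3} is entirely a reformulation of the finite-horizon a.s.\ statement into an ``infinite-horizon'' a.s.\ statement and a uniform-in-probability statement.
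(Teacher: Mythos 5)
Your proposal is correct and follows essentially the same route as the paper: part (i) is the paper's argument verbatim (intersect the full-probability events from Lemma \ref{lemma1} over integer horizons $T=n$ and conclude the first-exit time diverges), and part (ii) is the standard diagonal extraction of $T(\epsilon)$ from the finite-horizon convergence in probability, which is precisely the argument the paper imports from the continuous-time reference \cite{LiuKrsTAC10}. Your explicit coupling of horizon $n$ and tolerance $1/n$ so that a single $T(\epsilon)$ serves all $\delta$ is exactly the point the cited proof handles the same way.
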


\begin{proof}
See Appendix \ref{sec-pro-thm3-a} for the result (\ref{thm3-a}), and Appendix \ref{sec-pro-thm3-b} for the result (\ref{thm3-b}).
\end{proof}

Similar to the continuous case \cite{LiuKrsTAC10}, although it is not assumed that the original system (\ref{sys-1}) or its continuous-time version (\ref{sys-2}) has an equilibrium,  average systems may have stable equilibria. We consider continuous-time version (\ref{sys-2}) of the original system  as a perturbation of continuous average system (\ref{avesys-20}),  and analyze weak stability properties by studying equilibrium stability of continuous average system (\ref{avesys-20}).
%

\begin{theorem}\label{thm4}
Consider  continuous-time version (\ref{sys-2}) of the original system under Assumptions
\ref{local-linear}, \ref{ergodic}
and \ref{existence-ave2}. Then
if the equilibrium $\bar{X}^{\rm c}(t)\equiv0$ of
continuous average system (\ref{avesys-20}) is  exponentially stable, then it
is weakly exponentially stable under random perturbation
$R^{(2)}(t,X(\cdot),Y(\epsilon+\cdot))$, i.e., there exist constants $r>0$, $c>0$
  $\gamma>0$ and a function $T(\epsilon):(0,\epsilon_0)\to
\mathbb{N}$ such that  for any initial condition $\bar{X}^{\rm c}(0)=X_0=x\in
\{\check{x}\in \mathbb{R}^n||\check{x}|<r\}$, and any $\delta>0$,
the solution of system (\ref{sys-2}) satisfies
 \begin{align}\label{thm4-5}
\lim_{\epsilon\to 0}\ \inf\left\{t\geq 0:
|X(t)|>c|x|e^{-\gamma t}+\delta\right\}=+\infty\ \mbox{ a.s.}
\end{align}
and   
%
%
%
%
%
\begin{eqnarray}\label{thm4-f}
\lim_{\epsilon\to 0}\ P\left\{|X(t)|\leq c|x|e^{-\gamma
t}+\delta,\forall t\in [0,T(\epsilon)]\right\}=1
\end{eqnarray}
 with $\lim_{\epsilon\to0}T(\epsilon)=+\infty$.
\end{theorem}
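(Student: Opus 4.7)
The plan is to reduce Theorem~\ref{thm4} to Theorem~\ref{thm3} by combining the triangle inequality with the deterministic decay estimate supplied by exponential stability of the average system. The intuition is that once $X(t)$ and $\bar{X}^{\rm c}(t)$ stay $\delta$-close on a long time window, the exponential decay of $\bar{X}^{\rm c}(t)$ automatically confines $X(t)$ inside the envelope $c|x|e^{-\gamma t}+\delta$.

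First I would unpack the exponential stability hypothesis quantitatively: there exist constants $r>0$, $c>0$ and $\gamma>0$ such that for every initial condition $x$ with $|x|<r$ the solution $\bar{X}^{\rm c}(t)$ of (\ref{avesys-20}) exists on $[0,+\infty)$ (so Assumption~\ref{existence-ave2} is automatic on this neighborhood) and satisfies the bound $|\bar{X}^{\rm c}(t)|\le c|x|e^{-\gamma t}$. These are exactly the constants that will appear in (\ref{thm4-5}) and (\ref{thm4-f}).

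Next I would exploit the pointwise estimate
\begin{align*}
|X(t)| \le |X(t)-\bar{X}^{\rm c}(t)|+|\bar{X}^{\rm c}(t)| \le |X(t)-\bar{X}^{\rm c}(t)|+c|x|e^{-\gamma t}.
\end{align*}
This inequality shows that the event $\{|X(t)|>c|x|e^{-\gamma t}+\delta\}$ is contained in $\{|X(t)-\bar{X}^{\rm c}(t)|>\delta\}$, hence
\begin{align*}
\inf\{t\ge 0: |X(t)|>c|x|e^{-\gamma t}+\delta\}\ \ge\ \inf\{t\ge 0: |X(t)-\bar{X}^{\rm c}(t)|>\delta\}.
\end{align*}
Conclusion (\ref{thm4-5}) is then immediate from Theorem~\ref{thm3}(i). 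For (\ref{thm4-f}), the same inclusion yields
\begin{align*}
P\bigl\{|X(t)|\le c|x|e^{-\gamma t}+\delta,\ \forall t\in[0,T(\epsilon)]\bigr\}\ \ge\ 1-P\Bigl\{\sup_{0\le t\le T(\epsilon)}|X(t)-\bar{X}^{\rm c}(t)|>\delta\Bigr\},
\end{align*}
and the right-hand side tends to $1$ by Theorem~\ref{thm3}(ii), with $T(\epsilon)$ being exactly the function constructed there.

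The main obstacle is one of bookkeeping rather than substance: I must check that the constants $r,c,\gamma$ provided by exponential stability can be chosen independently of $\epsilon$, and that the approximation guarantees of Theorem~\ref{thm3} apply uniformly over the initial conditions $|x|<r$. Because the deterministic decay confines $\bar{X}^{\rm c}(t)$ to the ball of radius $cr$ for all time, the local Lipschitz constant needed in the underlying estimates of Theorem~\ref{thm3} can be fixed on one compact set, and the "neighborhood of stability" is preserved as $\epsilon\to 0$. Hence no additional estimate beyond the triangle inequality is required, and the two conclusions (\ref{thm4-5})--(\ref{thm4-f}) follow in parallel from the two parts of Theorem~\ref{thm3}.
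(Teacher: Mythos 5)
Your proposal is correct and follows essentially the same route as the paper's own proof: extract the constants $r,c,\gamma$ from exponential stability of the ($\epsilon$-independent) continuous average system, use the triangle inequality to obtain the event inclusion $\{|X(t)|>c|x|e^{-\gamma t}+\delta\}\subseteq\{|X(t)-\bar{X}^{\rm c}(t)|>\delta\}$, and then invoke the two parts of Theorem~\ref{thm3} for (\ref{thm4-5}) and (\ref{thm4-f}) respectively. No further comment is needed.
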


\begin{proof}
See Appendix \ref{sec-pro-thm4}.
\end{proof}

\begin{remark}
By analyzing the weak stability of continuous average system, we obtain  the solution property of system (\ref{sys-2}). Here the term ``weakly'' is used because the properties in question involve $\lim_{\epsilon\to 0}$ and are defined through the first exist time from a set. In \cite{Kha80}, stability concepts that are similarly defined under random perturbations are introduced for a nonlinear system perturbed by a stochastic process.
\end{remark}

\subsection{Approximation and stability results with continuous-time version of discrete average system}

Similarly, we can extend the finite-time approximation result in Lemma~\ref{lemma2} to arbitrarily long time intervals.

\begin{theorem}\label{thm5}
Consider  continuous-time version  (\ref{sys-2}) of the original system (\ref{sys-1}) under Assumptions \ref{local-linear} and
\ref{ergodic}. Then

(i) for any
$\delta>0$, 
\begin{align}\label{thm5-a}
\lim_{\epsilon\to 0}\ \inf\{t\geq 0:
|X(t)-\bar{X}^{\rm d}(t)|>\delta\}=+\infty\ \mbox{ a.s.};
\end{align}

(ii)
 there exists a function $T(\epsilon):(0,\epsilon_0)\to
\mathbb{N}$ such that for any $\delta>0$,
\begin{eqnarray}\label{thm5-b}
\lim_{\epsilon\to 0}\ P\left\{\sup_{0\leq t\leq
T(\epsilon)}|X(t)-\bar{X}^{\rm d}(t)|>\delta\right\}=0
\end{eqnarray}
with
     $\lim_{\epsilon\to 0}T(\epsilon)=+\infty.$
\end{theorem}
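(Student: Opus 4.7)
The plan is to mirror the extension strategy that was already used to go from Lemma \ref{lemma1} to Theorem \ref{thm3}, substituting Lemma \ref{lemma2} in place of Lemma \ref{lemma1}. Since Lemma \ref{lemma2} already provides finite-time almost sure convergence of $X(t)$ to $\bar{X}^{\rm d}(t)$ on any interval $[0,T]$, the only work remaining is to enlarge ``fixed $T$'' to ``$T$ depending on $\epsilon$ and tending to infinity.''

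For part (i), I would introduce the first exit time
\begin{equation*}
\tau_\delta^\epsilon \;=\; \inf\bigl\{t\ge 0:\ |X(t)-\bar{X}^{\rm d}(t)|>\delta\bigr\}.
\end{equation*}
Lemma \ref{lemma2} applied to each integer $T=n\in\mathbb{N}$ gives an exceptional null set $N_n$ such that for every $\omega\notin N_n$, $\sup_{0\le t\le n}|X(t)-\bar{X}^{\rm d}(t)|\to 0$ as $\epsilon\to 0$. Set $N=\bigcup_{n\ge 1} N_n$; this is still a null set because the union is countable. For every $\omega\notin N$ and every fixed $n$, there exists $\epsilon_n(\omega)>0$ such that $\sup_{0\le t\le n}|X(t)-\bar{X}^{\rm d}(t)|<\delta$ for all $\epsilon<\epsilon_n(\omega)$, which in particular forces $\tau_\delta^\epsilon(\omega)>n$. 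Hence $\liminf_{\epsilon\to 0}\tau_\delta^\epsilon(\omega)\ge n$ for every $n$, so $\tau_\delta^\epsilon\to+\infty$ almost surely, which is exactly \eqref{thm5-a}.

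For part (ii), I would use a diagonal construction. Fix any sequence $\delta_n\downarrow 0$ (for concreteness $\delta_n=1/n$). Lemma \ref{lemma2} implies in particular convergence in probability, so for each $n$ there exists $\epsilon_n\in(0,\epsilon_0)$ such that
\begin{equation*}
P\!\left\{\sup_{0\le t\le n}|X(t)-\bar{X}^{\rm d}(t)|>\tfrac{1}{n}\right\}\;<\;\tfrac{1}{n}
\quad\text{for all }\epsilon<\epsilon_n.
\end{equation*}
Without loss of generality $\epsilon_n$ is strictly decreasing to $0$. Define $T(\epsilon)=n$ whenever $\epsilon\in[\epsilon_{n+1},\epsilon_n)$. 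Then $T(\epsilon)\in\mathbb{N}$, $T(\epsilon)\to+\infty$ as $\epsilon\to 0$, and for any fixed $\delta>0$, as soon as $n$ is so large that $1/n<\delta$, we have
\begin{equation*}
P\!\left\{\sup_{0\le t\le T(\epsilon)}|X(t)-\bar{X}^{\rm d}(t)|>\delta\right\}\;\le\;P\!\left\{\sup_{0\le t\le n}|X(t)-\bar{X}^{\rm d}(t)|>\tfrac1n\right\}\;<\;\tfrac{1}{n}\to 0,
\end{equation*}
which yields \eqref{thm5-b} together with $\lim_{\epsilon\to 0}T(\epsilon)=+\infty$.

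The main obstacle is essentially bookkeeping rather than analysis: Lemma \ref{lemma2} does the heavy lifting (the uniform-in-time bounding of the averaging error over locally Lipschitz vector fields with an ergodic, rather than stationary or i.i.d., perturbation), so here I only need to make sure that (a) the exceptional null sets arising from applying Lemma \ref{lemma2} at each $T=n$ can be merged into one null set (which requires only that the index set be countable), and (b) the step function $T(\epsilon)$ is well-defined and monotone. The one subtlety worth noting is that Assumption \ref{existence-ave2} is not needed here because Lemma \ref{lemma2} compares $X(t)$ to $\bar{X}^{\rm d}(t)$ and the discrete average system always admits a global solution by iteration under the local Lipschitz condition in Assumption \ref{local-linear}; this is consistent with the hypotheses listed in the statement of Theorem \ref{thm5}.
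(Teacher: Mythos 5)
Your proposal is correct and matches the paper's approach: the authors prove Theorem \ref{thm5} precisely by invoking Lemma \ref{lemma2} and repeating the argument of Theorem \ref{thm3}, whose part (i) uses the same countable-union-of-null-sets and first-exit-time argument you give, and whose part (ii) rests on the same diagonal construction of a step function $T(\epsilon)$. Your observation that Assumption \ref{existence-ave2} is not needed here is also consistent with the paper's hypotheses.
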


\begin{proof}
See Appendix \ref{sec-pro-thm5}.
\end{proof}

Similar as the last subsection, if the continuous-time version (\ref{avesys-2}) of discrete average system (\ref{avesys-1}) has some stability property, we can obtain the solution property of the continuous-time version (\ref{sys-2}) of the original system (\ref{sys-1}).

\begin{theorem}\label{thm6}
Consider  continuous-time version  (\ref{sys-2}) of the original system (\ref{sys-1}) under Assumptions
\ref{local-linear} and  \ref{ergodic}. Then
if for any $\epsilon\in (0,\epsilon_0)$, the equilibrium $\bar{X}^{\rm d}(t)\equiv0$ of continuous-time version (\ref{avesys-2}) of
 discrete average system (\ref{avesys-1}) is exponentially stable, then it
is weakly exponentially stable under random perturbation
$R^{(1)}(t,X(\cdot),Y(\epsilon+\cdot))$, i.e., there exist constants $r_{\epsilon}>0$, $c_{\epsilon}>0$, $\gamma_{\epsilon}>0$, and a function $T(\epsilon): (0,\epsilon_0)\to \mathbb{N}$ such that  for any initial condition $\bar{X}^{\rm d}_0=X_0=x\in
\{\check{x}\in \mathbb{R}^n:|\check{x}|<r_{\epsilon}\}$, and any $\delta>0$,
the solution of system (\ref{sys-2}) satisfies
 \begin{align}\label{thm6-6}
\lim_{\epsilon\to 0}\ \inf\left\{t\geq 0:
|X(t)|>c_{\epsilon}|x|e^{-\gamma_{\epsilon} t}+\delta\right\}=+\infty\ \mbox{ a.s.}
\end{align}
 and  
%
%
%
%
\begin{eqnarray}\label{thm6-f}
\lim_{\epsilon\to 0}\ P\left\{|X(t)|\leq c_{\epsilon}|x|e^{-\gamma_{\epsilon} t}+\delta,\forall t\in [0,T(\epsilon)]\right\}=1
\end{eqnarray}
 with   $\lim_{\epsilon\to 0}T(\epsilon)=+\infty$.
If the equilibrium $\bar{X}^{\rm d}(t)\equiv0$ of  continuous-time version (\ref{avesys-2}) of
 discrete average system (\ref{avesys-1}) is exponentially stable uniformly w.r.t. $\epsilon\in (0,\epsilon_0)$, then the above constants  $r_{\epsilon}, c_{\epsilon}>0$
and  $\gamma_{\epsilon}$ can be taken independent of $\epsilon$.
\end{theorem}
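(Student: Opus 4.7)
The plan is to adapt the proof of Theorem~\ref{thm4}, replacing the continuous average system by the continuous-time version (\ref{avesys-2}) of the discrete average system, and replacing Theorem~\ref{thm3} by Theorem~\ref{thm5}. Fix $\epsilon\in(0,\epsilon_0)$. The exponential-stability hypothesis on $\bar{X}^{\rm d}\equiv 0$ provides constants $r_\epsilon,c_\epsilon,\gamma_\epsilon>0$ such that, for every initial condition $x$ with $|x|<r_\epsilon$, the solution of (\ref{avesys-2}) obeys
\begin{align*}
|\bar{X}^{\rm d}(t)|\leq c_\epsilon |x|\,e^{-\gamma_\epsilon t}\qquad \forall\, t\geq 0.
\end{align*}
This is the exponentially decaying "skeleton" against which $X(t)$ will be compared.

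Next I would apply the triangle inequality to the decomposition $X(t)=\bar{X}^{\rm d}(t)+(X(t)-\bar{X}^{\rm d}(t))$, yielding $|X(t)|\leq c_\epsilon |x|\,e^{-\gamma_\epsilon t}+|X(t)-\bar{X}^{\rm d}(t)|$. Therefore, for any $\delta>0$, the inclusion
\begin{align*}
\bigl\{t\geq 0:\ |X(t)|>c_\epsilon|x|e^{-\gamma_\epsilon t}+\delta\bigr\}\ \subseteq\ \bigl\{t\geq 0:\ |X(t)-\bar{X}^{\rm d}(t)|>\delta\bigr\}
\end{align*}
holds pathwise, so the infimum of the former set dominates that of the latter. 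Invoking Theorem~\ref{thm5}(i) then yields (\ref{thm6-6}), and invoking Theorem~\ref{thm5}(ii) with the same function $T(\epsilon)$ (whose existence is asserted there and satisfies $T(\epsilon)\to\infty$) yields (\ref{thm6-f}). For the uniform case, if the exponential stability of (\ref{avesys-2}) is uniform with respect to $\epsilon\in(0,\epsilon_0)$, the constants $r_\epsilon,c_\epsilon,\gamma_\epsilon$ may be chosen independently of $\epsilon$, and the identical argument delivers $\epsilon$-independent constants in the conclusion.

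The main obstacle I expect is bookkeeping the $\epsilon$-dependence: the continuous-time version of the discrete average system itself varies with $\epsilon$ (through the piecewise-constant interpolation on the grid $t_k=\epsilon k$), so the region of attraction $r_\epsilon$ and the rate $\gamma_\epsilon$ a priori do too. To apply Theorem~\ref{thm5}, I need $\bar{X}^{\rm d}(t)$ to stay in a compact set over the time scale of interest; the exponential bound immediately confines it to the closed ball of radius $c_\epsilon r_\epsilon$ for all $t\geq 0$, so local Lipschitzness of $\bar{f}$ applies on that ball and the approximation lemma runs without additional effort. The subtlety in the non-uniform case is simply that the statement is read with $\epsilon$ fixed first (selecting $r_\epsilon,c_\epsilon,\gamma_\epsilon$) and the limit $\epsilon\to 0$ taken afterwards, so no uniform-in-$\epsilon$ domain of attraction is required; once this order-of-quantifiers point is observed, the proof reduces to a clean combination of the exponential decay of $\bar{X}^{\rm d}$ with the closeness result of Theorem~\ref{thm5}.
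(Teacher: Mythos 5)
Your proposal is correct and follows essentially the same route as the paper: the authors' own proof of Theorem~\ref{thm6} is precisely "use Theorem~\ref{thm5} and follow the proof of Theorem~\ref{thm4}," i.e., the exponential bound on $\bar{X}^{\rm d}(t)$ plus the set inclusion $\{|X(t)|>c_\epsilon|x|e^{-\gamma_\epsilon t}+\delta\}\subseteq\{|X(t)-\bar{X}^{\rm d}(t)|>\delta\}$ combined with the approximation results. Your additional remarks on the order of quantifiers and the $\epsilon$-dependence of the constants are consistent with the statement and do not change the argument.
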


\begin{proof}
See Appendix \ref{sec-pro-thm6}.
\end{proof}

\begin{remark}\label{remark-thm6}
Notice that continuous average system (\ref{avesys-20}) is independent of small parameter $\epsilon$, but
discrete average system (\ref{avesys-1}) or its continuous-time version (\ref{avesys-2})
is dependent on $\epsilon$. The stability results (\ref{thm6-6})-(\ref{thm6-f}) are different from (\ref{thm4-5})-(\ref{thm4-f}), but in the discrete-time scale, the results are the same in essence, which can be seen in the next subsection.
\end{remark}

\subsection{Approximation and stability results in the discrete-time scale}

 In the last two subsections, we consider the approximation and stability analysis
 in the continuous time scale. Since the original system is in the discrete time scale, now we consider averaging results in the discrete time scale.

 Let $\{X_k\}$ and $\{\bar{X}^{\rm d}_k\}$ be the solutions of the original  system (\ref{sys-1}),  discrete average system
(\ref{avesys-1}),  respectively. Rewrite system (\ref{sys-1}) as
\begin{align}\label{sys-1-a}
X_{k+1}=X_k+\epsilon \bar{f}(X_k)+R^{(3)}(X_k,Y_{k+1}), \ \ k=0,1,2,\ldots,
\end{align}
where
$R^{(3)}(X_k,Y_{k+1})=\epsilon(f(X_k,Y_{k+1})-\bar{f}(X_k)).$
Hence we can consider system (\ref{sys-1-a}) (i.e. system (\ref{sys-1}))  as a random perturbation of discrete average system (\ref{avesys-1}). Thus we can analyze the weak stability of (\ref{avesys-1}) to obtain the solution property of (\ref{sys-1-a}) (i.e. system (\ref{sys-1})).

We have the following approximation results.
\begin{lemma}\label{lem7}
 Consider system (\ref{sys-1}) under Assumptions \ref{local-linear} and \ref{ergodic}. Then for any  $N\in\mathbb{N}$,
\begin{align}\label{lem7-a}
\lim_{\epsilon\to 0}\sup_{0\leq k\leq
[N/\epsilon]}|X_k-\bar{X}^{\rm d}_k|=0\ \mbox{ a.s.}
\end{align}
\end{lemma}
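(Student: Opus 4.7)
The plan is to reduce the discrete-time statement directly to the continuous-time approximation already established in Lemma \ref{lemma2}, exploiting the fact that the piecewise constant interpolations $X(t)$ and $\bar{X}^{\rm d}(t)$ coincide with $X_k$ and $\bar{X}^{\rm d}_k$ at the grid points $t_k = \epsilon k$.

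First I would observe that by the very definitions $X(t_k) = X_k$ and $\bar{X}^{\rm d}(t_k) = \bar{X}^{\rm d}_k$ for every $k$, so $|X_k - \bar{X}^{\rm d}_k| = |X(t_k) - \bar{X}^{\rm d}(t_k)|$. Next I would note that the range $0 \leq k \leq [N/\epsilon]$ corresponds in the rescaled time to $0 \leq t_k \leq \epsilon [N/\epsilon] \leq N$, so every grid point falls inside the compact interval $[0,N]$. Consequently
\begin{align*}
\sup_{0\leq k\leq [N/\epsilon]}|X_k-\bar{X}^{\rm d}_k| \;=\; \sup_{0\leq k\leq [N/\epsilon]}|X(t_k)-\bar{X}^{\rm d}(t_k)| \;\leq\; \sup_{0\leq t\leq N}|X(t)-\bar{X}^{\rm d}(t)|.
\end{align*}

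Finally I would invoke Lemma \ref{lemma2} with the choice $T=N$, which yields $\lim_{\epsilon \to 0}\sup_{0\leq t\leq N}|X(t)-\bar{X}^{\rm d}(t)|=0$ almost surely. Combining this with the previous inequality gives the claim. There is no substantial obstacle here; the entire content is bookkeeping between the two time scales, and Assumptions \ref{local-linear} and \ref{ergodic} enter only through their use inside Lemma \ref{lemma2}, which is why Assumption \ref{existence-ave2} is not needed for this statement.
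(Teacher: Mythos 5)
Your proposal is correct and is essentially the paper's own argument: the paper likewise identifies $X_k$ with $X(\epsilon k)$ and $\bar{X}^{\rm d}_k$ with $\bar{X}^{\rm d}(\epsilon k)$, relates the supremum over $0\leq k\leq [N/\epsilon]$ to the supremum over $t\in[0,N]$ (using that the interpolations are piecewise constant, so this is in fact an equality rather than just the inequality you state), and concludes by Lemma \ref{lemma2} with $T=N$. Your observation that Assumption \ref{existence-ave2} is not needed also matches the paper's hypotheses for this lemma.
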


\begin{proof}
See Appendix \ref{sec-pro-lem7}.
\end{proof}

\begin{theorem}\label{thm8}
Consider system (\ref{sys-1}) under Assumptions \ref{local-linear} and \ref{ergodic}.
Then we have

(i) for any
$\delta>0$,
\begin{align}\label{thm8-a}
\lim_{\epsilon\to 0}\ \inf\{k\in\mathbb{N}:
|X_k-\bar{X}^{\rm d}_k|>\delta\}=+\infty\ \mbox{ a.s.};
\end{align}

(ii) for any $\delta>0$ and any  $N\in\mathbb{N}$,
\begin{eqnarray}\label{thm8-b}
\lim_{\epsilon\to 0}\ P\left\{\sup_{0\leq k\leq
[N/\epsilon]}|X_k-\bar{X}^{\rm d}_k|>\delta\right\}=0.
\end{eqnarray}
\end{theorem}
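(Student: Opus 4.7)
The plan is to reduce both parts of Theorem~\ref{thm8} to the continuous-time results already established, exploiting the fact that the piecewise constant interpolations $X(t)$ and $\bar X^{\rm d}(t)$ agree with the discrete sequences $X_k$ and $\bar X^{\rm d}_k$ on the grid $t_k=\epsilon k$, and that neither interpolation changes value between grid points.

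For part (ii), I would simply invoke Lemma~\ref{lem7}. Since $X(t)=X_k$ and $\bar X^{\rm d}(t)=\bar X^{\rm d}_k$ for $t\in[t_k,t_{k+1})$, the quantity $|X(t)-\bar X^{\rm d}(t)|$ is piecewise constant and equal to $|X_k-\bar X^{\rm d}_k|$ on the $k$-th subinterval, hence
\[
\sup_{0\le t\le N}\bigl|X(t)-\bar X^{\rm d}(t)\bigr|=\sup_{0\le k\le [N/\epsilon]}\bigl|X_k-\bar X^{\rm d}_k\bigr|.
\]
Lemma~\ref{lem7} asserts that the right-hand side converges to $0$ almost surely as $\epsilon\to 0$. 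Almost sure convergence implies convergence in probability, which is exactly the statement (\ref{thm8-b}).

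For part (i), I would translate the continuous-time first-exit statement (\ref{thm5-a}) of Theorem~\ref{thm5} into discrete time. Because $|X(t)-\bar X^{\rm d}(t)|$ is piecewise constant on the grid $\{t_k\}$, its first passage above $\delta$ can only occur at some grid point $t_{k^*(\epsilon)}$, where $k^*(\epsilon):=\inf\{k\in\mathbb N:|X_k-\bar X^{\rm d}_k|>\delta\}$. Consequently
\[
\inf\{t\ge 0:|X(t)-\bar X^{\rm d}(t)|>\delta\}=\epsilon\,k^*(\epsilon).
\]
Theorem~\ref{thm5}(i) (which is available under precisely Assumptions~\ref{local-linear} and~\ref{ergodic}) gives that the left-hand side tends to $+\infty$ almost surely as $\epsilon\to 0$. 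Since $\epsilon k^*(\epsilon)\to\infty$ a.s.\ together with $\epsilon\to 0^+$ forces $k^*(\epsilon)=(\epsilon k^*(\epsilon))/\epsilon\to\infty$ a.s., the conclusion (\ref{thm8-a}) follows.

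There is no real obstacle here beyond keeping the bookkeeping between $t$ and $k=t/\epsilon$ honest; the heavy lifting was done in Lemmas~\ref{lemma2}/\ref{lem7} and Theorem~\ref{thm5}. The only point that deserves a line of care is that the supremum interval $[0,N]$ in continuous time corresponds to the index range $0\le k\le [N/\epsilon]$ (since $t_{[N/\epsilon]}\le N<t_{[N/\epsilon]+1}$), matching the index set in the statement of Theorem~\ref{thm8}(ii). With that alignment noted, both conclusions drop out immediately.
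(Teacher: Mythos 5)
Your proposal is correct. For part (ii) you do exactly what the paper does: Lemma~\ref{lem7} gives almost sure convergence of $\sup_{0\le k\le [N/\epsilon]}|X_k-\bar{X}^{\rm d}_k|$, and almost sure convergence implies convergence in probability. For part (i) your route differs mildly from the paper's: the paper invokes Lemma~\ref{lem7} (noting $[N/\epsilon]\ge N$ for $\epsilon\le 1$) and then reruns the argument of Theorem~\ref{thm3}(i) directly in discrete time --- define the full-measure set on which the finite-horizon supremum converges for every $N\in\mathbb{N}$ and observe that on this set the first exit index eventually exceeds any fixed $N$ --- whereas you pull back the continuous-time exit-time result of Theorem~\ref{thm5}(i) through the time change $t=\epsilon k$, using that $|X(t)-\bar{X}^{\rm d}(t)|$ is piecewise constant on the grid so its first passage above $\delta$ occurs at $t_{k^*(\epsilon)}=\epsilon k^*(\epsilon)$, and that $\epsilon k^*(\epsilon)\to\infty$ together with $\epsilon\le 1$ forces $k^*(\epsilon)\ge \epsilon k^*(\epsilon)\to\infty$ a.s. Both arguments rest on the same finite-horizon approximation (Lemma~\ref{lemma2}/Lemma~\ref{lem7}); yours saves repeating the $\Omega'$ construction at the cost of the (easily verified) identification of the continuous and discrete first-exit times, and correctly notes that Theorem~\ref{thm5}(i) holds under precisely Assumptions~\ref{local-linear} and~\ref{ergodic}, so no extra hypothesis is smuggled in.
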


\begin{proof}
See Appendix \ref{sec-pro-thm8}.
\end{proof}

To the best of our knowledge, this is the  first approximation result of stochastic averaging in discrete time for locally Lipschitz systems. \cite{SoloKong95} developed discrete-time averaging theory, which mainly focuses on globally Lipschitz systems.

About the solution property of the original system (\ref{sys-1}) by analyzing the stability of  discrete average systems (\ref{avesys-1}), we have the following results.

\begin{theorem}\label{thm9}
Consider system (\ref{sys-1}) under Assumptions \ref{local-linear} and \ref{ergodic}. Then 
if for any $\epsilon\in (0,\epsilon_0)$, the equilibrium $\bar{X}^{\rm d}_k\equiv0$ of
the discrete average system (\ref{avesys-1}) is exponentially stable, then it
is weakly exponentially stable under random perturbation
$R^{(3)}(X_k,Y_{k+1})$, i.e., there exist constants $r_{\epsilon}>0$, $c_{\epsilon}>0$
and  $0<\gamma_{\epsilon}<1$ such that  for any initial condition $X_0=x\in
\{\check{x}\in \mathbb{R}^n:|\check{x}|<r_{\epsilon}\}$, and any $\delta>0$,
the solution of system (\ref{sys-1}) satisfies
 \begin{align}\label{thm9-6}
\lim_{\epsilon\to 0}\ \inf\left\{k\in\mathbb{N}:
|X_k|>c_{\epsilon}|x|(\gamma_{\epsilon})^k+\delta\right\}=+\infty\ \mbox{ a.s.}
\end{align}
and
\begin{eqnarray}\label{thm9-f}
\lim_{\epsilon\to 0}\ P\left\{|X_k|\leq c_{\epsilon}|x|(\gamma_{\epsilon})^k+\delta,\forall k=0,1,\ldots,[N/\epsilon]\right\}=1,
\end{eqnarray}
where $N$ is any natural number. If the equilibrium $\bar{X}^{\rm d}_k\equiv0$ of
 discrete average system (\ref{avesys-1}) is exponentially stable uniformly w.r.t. $\epsilon\in (0,\epsilon_0)$, then the above constants  $r_{\epsilon}, c_{\epsilon}>0$
and  $\gamma_{\epsilon}$ can be taken independent of $\epsilon$.

%
%
%
%
%
\end{theorem}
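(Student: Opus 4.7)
The plan is to reduce Theorem~\ref{thm9} to the discrete-time approximation results in Theorem~\ref{thm8} by a simple containment argument, carried out natively in the discrete index $k=0,1,2,\ldots$ rather than via the continuous-time interpolations used for Theorems~\ref{thm4} and~\ref{thm6}. Two ingredients feed in: (a) the hypothesized exponential stability of $\bar{X}^{\rm d}_k\equiv 0$ for the discrete average system~(\ref{avesys-1}), which supplies constants $r_\epsilon,c_\epsilon>0$ and $0<\gamma_\epsilon<1$ such that every solution starting at $\bar{X}^{\rm d}_0=x$ with $|x|<r_\epsilon$ obeys $|\bar{X}^{\rm d}_k|\leq c_\epsilon|x|\gamma_\epsilon^{\,k}$ for all $k\in\mathbb{N}$; and (b) Theorem~\ref{thm8}, giving both the first-exit-time statement $\inf\{k:|X_k-\bar{X}^{\rm d}_k|>\delta\}\to +\infty$ a.s. and the uniform-in-$k$ convergence in probability of $\sup_{0\leq k\leq[N/\epsilon]}|X_k-\bar{X}^{\rm d}_k|$ to zero.

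Since $X_0=\bar{X}^{\rm d}_0=x$, the triangle inequality yields
\begin{align*}
|X_k|\;\leq\;|X_k-\bar{X}^{\rm d}_k|+|\bar{X}^{\rm d}_k|\;\leq\;|X_k-\bar{X}^{\rm d}_k|+c_\epsilon|x|\gamma_\epsilon^{\,k},
\end{align*}
so, for every sample path, the set $A:=\{k:|X_k|>c_\epsilon|x|\gamma_\epsilon^{\,k}+\delta\}$ is contained in $B:=\{k:|X_k-\bar{X}^{\rm d}_k|>\delta\}$. This containment is the whole point. For (\ref{thm9-6}), $A\subseteq B$ forces $\inf A\geq\inf B$, and since $\inf B\to+\infty$ a.s.\ as $\epsilon\to 0$ by Theorem~\ref{thm8}(i), the same limit holds for $\inf A$. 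For (\ref{thm9-f}), the same containment restricted to $k\in\{0,\ldots,[N/\epsilon]\}$ gives
\begin{align*}
\bigl\{\exists\, k\in\{0,\ldots,[N/\epsilon]\}:\,|X_k|>c_\epsilon|x|\gamma_\epsilon^{\,k}+\delta\bigr\}\;\subseteq\;\Bigl\{\sup_{0\leq k\leq[N/\epsilon]}|X_k-\bar{X}^{\rm d}_k|>\delta\Bigr\},
\end{align*}
so taking complements and invoking Theorem~\ref{thm8}(ii) delivers the desired probability limit $1$.

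For the uniformity assertion, nothing in the containment argument interacts with the $\epsilon$-dependence of the stability constants, so if $r_\epsilon,c_\epsilon,\gamma_\epsilon$ can be taken independent of $\epsilon$ on a common basin, the conclusion inherits the same $\epsilon$-uniform constants verbatim. The main obstacle, such as it is, lies upstream in Theorem~\ref{thm8}; once that discrete-scale approximation is established, the deduction of weak exponential stability here is essentially bookkeeping. A small point worth flagging is that no auxiliary basin-invariance argument is needed: the decay bound $|\bar{X}^{\rm d}_k|\leq c_\epsilon|x|\gamma_\epsilon^{\,k}$ is a property of the deterministic average trajectory alone, valid for all $k$ as soon as the single initial condition $|x|<r_\epsilon$ is imposed, and the random trajectory $X_k$ is compared to it only through the additive approximation error $|X_k-\bar{X}^{\rm d}_k|$, which Theorem~\ref{thm8} controls.
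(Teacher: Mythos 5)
Your proposal is correct and matches the paper's own route: the paper proves Theorem~\ref{thm9} by invoking Theorem~\ref{thm8} and repeating the argument of Theorem~\ref{thm4}, which is exactly your containment $\left\{|X_k|>c_{\epsilon}|x|(\gamma_{\epsilon})^k+\delta\right\}\subseteq\left\{|X_k-\bar{X}^{\rm d}_k|>\delta\right\}$ derived from the triangle inequality and the exponential decay of the average trajectory. Your remarks on the uniformity in $\epsilon$ and on not needing a separate basin-invariance argument are consistent with what the paper leaves implicit.
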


\begin{proof}
See Appendix \ref{sec-pro-thm9}.
\end{proof}

\begin{remark}\label{remark-otherstability}
Similar to continuous-time stochastic averaging in \cite{LiuKrsTAC10}, if the discrete average system has
other properties (boundedness, attractivity, stability and asymptotic stability),
we can obtain corresponding (boundedness, attractivity, stability and asymptotic stability) results for the original system.
\end{remark}

\begin{remark}\label{remark-global}
The stability results in the above Theorems \ref{thm4}, \ref{thm6}, and \ref{thm9} are local, but if the equilibrium of  corresponding average systems  is globally stable (asymptotical stable or exponentially stable),  then it is globally weakly stable (asymptotical stable or exponentially stable, respectively).
\end{remark}

\begin{remark}\label{remark-SESandSA}
 In analyzing the similar kind of system as (\ref{sys-1}), average method is  different from ordinary differential equation (ODE) method and weak convergence method of stochastic approximation (\cite{Lju77,Che03,KusYin03}). In ODE method (gain coefficients are changing with iteration steps) and weak convergence method (constant gain), regression function or cost function (i.e., $f(\cdot)$ in system (\ref{sys-1})) acts as the nonlinear vector field  of  an ordinary differential equation, which is used to compare with the original system. In both methods, to analyze the convergence of the solution of system (\ref{sys-1}),
there need some restrictions on both the growth rate of nonlinear vector fields and the algorithm itself (i.e., uniformly bounded), or the existence of some continuously differential function (Lyapunov function) satisfying some conditions. In some sense, these conditions (i.e. boundedness of iteration sequence) are not easy to verify.
Average method is to use a new system (namely, average system) to approximate system (\ref{sys-1}) and thus makes it possible to obtain the solution property of system (\ref{sys-1}). From Theorems \ref{thm8} and \ref{thm9}, we can see that the approximation conditions  are easy to verify.
\end{remark}

\section{Discrete-time Stochastic Extremum seeking Algorithm for Static Map}\label{sec-applicationSM}
Consider the quadratic function
\begin{equation}\label{sm-function}
\varphi(x)=\varphi^*+\frac{\varphi''}{2}(x-x^*)^2,
\end{equation}
where $x^*\in \mathbb{R}$, $\varphi^*\in\mathbb{R}$, and $\varphi''\in\mathbb{R}$ are unknown. Any
$\mathbb{C}^2$ function $\varphi(\cdot)$
 with an extremum at $x=x^*$ and with $\varphi''\neq 0$ can be locally approximated
by (\ref{sm-function}). Without loss of generality, we assume that
$\varphi''>0$. In this section, we design an algorithm to make
$|x_k-x^*|$ as small as possible, so that the output $\varphi(x_k)$
 is driven to  its minimum $\varphi^*$. The only available information is the output $y=\varphi(x_k)$ with measurement noise.

Denote $\hat{x}_k$ as the $k$ step estimate of the unknown optimal input
$x^*$. Design iteration algorithm as
\begin{equation}\label{es-iteration}
\hat{x}_{k+1}=\hat{x}_k-\epsilon \sin(v_{k+1})y_{k+1},\ \ k=0,1,\ldots,
\end{equation}
where $y_{k+1}=\varphi(x_k)+W_{k+1}$ is the measurement output, $\{v_k\}$ is an ergodic stochastic process with invariant distribution $\mu$ and living
space $S_v$, and  $\{W_{k}\}$ is the measurement noise, which is assumed to be bounded with a bound $M$ and ergodic with invariant distribution $\nu$ and living space $S_W$. $\epsilon\in(0,\epsilon_0)$ is a positive small parameter for some constant $\epsilon_0>0$. The perturbation process $\{v_k\}$ is independent of the measurement noise process $\{W_k\}$.

Define $x_k=\hat{x}_k+a\sin(v_{k+1})$, $a>0$ and the estimation error $\tilde{x}_k=\hat{x}_k-x^*$. Then we have
\begin{align}\label{es-err-sys}
\tilde{x}_{k+1}=\tilde{x}_k-\epsilon \sin(v_{k+1})\left[\varphi^*+\frac{\varphi^{''}}{2}\left(\tilde{x}_k
+a\sin(v_{k+1})\right)^2+W_{k+1}\right].
\end{align}
To analyze the solution property of the error system (\ref{es-err-sys}), we will use stochastic averaging theory developed in the last section. First, to calculate the average system, we choose the excitation process $\{v_k\}$
 as a sequence of i.i.d. random variable  with invariant distribution   $\mu(dy)=\frac{1}{\sqrt{2\pi}\sigma}e^{-\frac{y^2}{2\sigma^2}}dy$. We assume that the measurement noise process $\{W_k\}$ is any bounded ergodic process.

By (\ref{a-bar}), we have
 \begin{align}
 {\rm Ave}\{\sin^{i}(v_{k+1})\}&\hspace{-1mm}\triangleq\hspace{-1mm}\int_{S_v}\sin^{i}(y)\mu(dy)=0,\quad i=1,3,\\
 {\rm Ave}\{\sin^2(v_{k+1})\}&\hspace{-1mm}\triangleq\hspace{-1mm}\int_{S_v}\sin^2(y)\mu(dy)=\frac12-\frac12e^{-2\sigma^2},\\
 {\rm Ave}\{\sin(v_{k+1})W_{k+1}\}&\hspace{-1mm}\triangleq\hspace{-1mm}\int_{S_v\times S_W}\sin(y)x\mu(dy)\times\nu(dx)\cr
 &\hspace{-1mm}=\hspace{-1mm}\int_{S_v}\sin(y)\mu(dy)\times \int_{S_W}x\nu(dx)=0.
 \end{align}
Thus, we obtain  the average system of the error system (\ref{es-err-sys})
\begin{align}\label{es-err-avesys}
\tilde{x}_{k+1}^{\rm ave}&=\tilde{x}_{k}^{\rm ave}-\epsilon \frac{a\varphi^{''}(1-e^{-2\sigma^2})}{2}\tilde{x}_{k}^{\rm ave}\cr
&=\left(1-\epsilon \frac{a\varphi^{''}(1-e^{-2\sigma^2})}{2}\right)\tilde{x}_{k}^{\rm ave}.
\end{align}
Since $\varphi''>0$, there exists $\epsilon^*=\frac{2}{a\varphi^{''}(1-e^{-2\sigma^2})}$ such that  the average system (\ref{es-err-avesys})
is globally exponentially stable for $\epsilon\in (0,\epsilon^*)$.

\begin{figure}[t]
\centering
\includegraphics*[width=7cm]{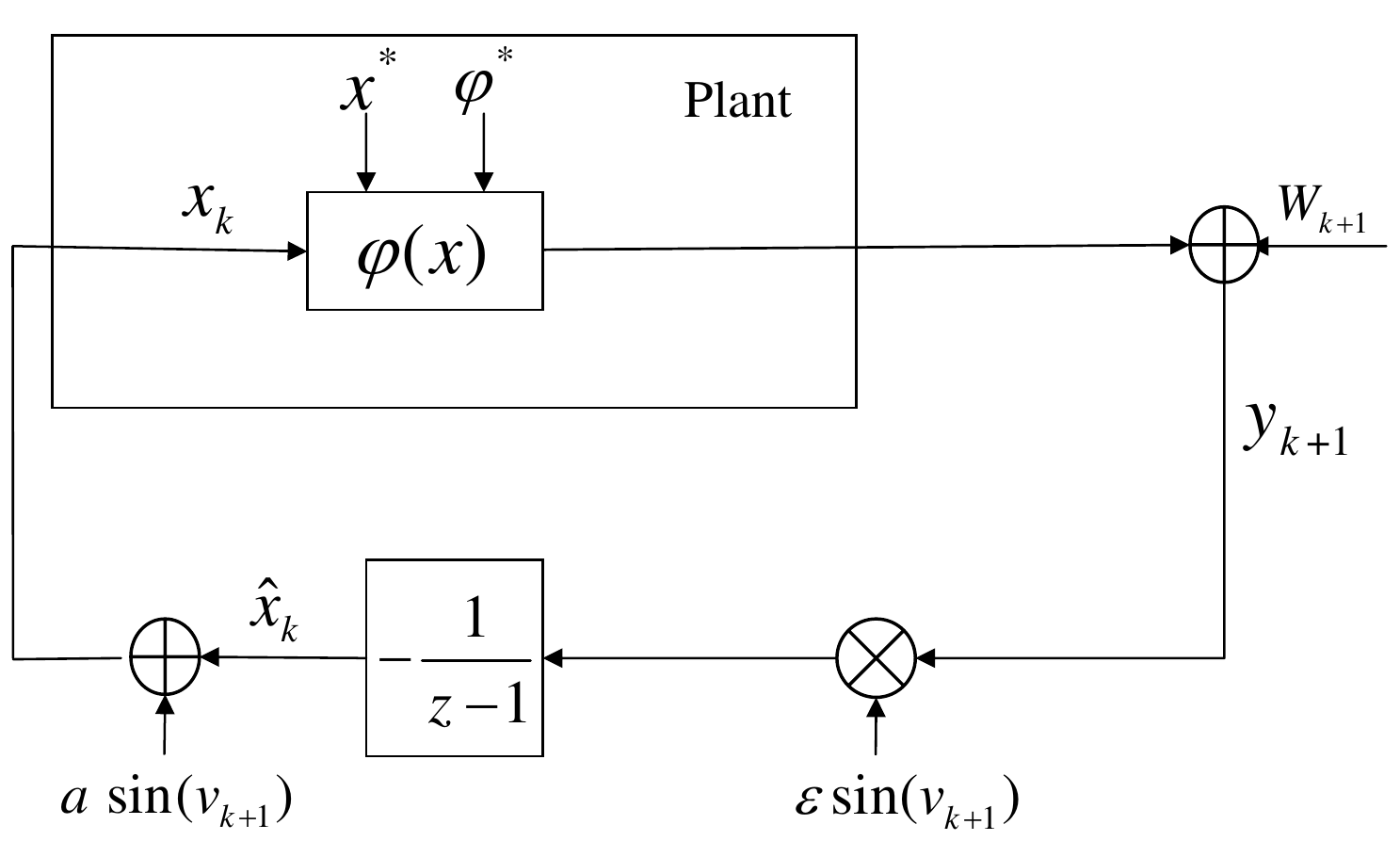}%
\hspace{1cm}%
\caption{Discrete-time stochastic extremum seeking scheme for a static map.}
 \label{fig-dis-es}
\end{figure}

Thus by  Theorem \ref{thm9}, Remark \ref{remark-noise} and Remark \ref{remark-global}, for the discrete-time stochastic extremum seeking
algorithm
 in Fig. \ref{fig-dis-es}, we have the following theorem.
\begin{theorem}\label{es-th-static}
Consider the static map (\ref{sm-function}) under the iteration algorithm  (\ref{es-iteration}). Then
 there exist constants $c_{\epsilon}>0$ and $0<\gamma_{\epsilon}<1$  such that for
  any initial condition
  $\tilde{x}_0\in \mathbb{R}$ and any
$\delta>0,$
\begin{eqnarray}\label{static-result-1}
\lim_{\epsilon\to0}\ \inf\left\{k\in\mathbb{N}:
\left|\tilde{x}_k\right|
>c_{\epsilon}\left|\tilde{x}_0\right|(\gamma_{\epsilon})^k+\delta\right\}=+\infty\  \mbox{ a.s. }
\end{eqnarray}
and
\begin{align}\label{static-result-2}
\lim_{\epsilon\to 0}P\left\{|\tilde{x}_k|\leq c_{\epsilon}|\tilde{x}_0|(\gamma_{\epsilon})^k+\delta,\forall k=0,1,\ldots,[N/\epsilon]\right\}=1.
\end{align}
\end{theorem}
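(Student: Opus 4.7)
The plan is to cast the error system (\ref{es-err-sys}) directly into the form of the abstract system (\ref{sys-1}) and then invoke Theorem \ref{thm9} together with Remark \ref{remark-global}. The main subtlety is that the measurement noise $W_{k+1}$ enters multiplicatively through $\sin(v_{k+1})W_{k+1}$, so Remark \ref{remark-noise} (which only covers additive independent noise) does not apply as stated; instead I would absorb $W$ into the perturbation itself by working with a joint ergodic process.

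Specifically, I would set $Y_{k+1} := (v_{k+1}, W_{k+1}) \in S_v \times S_W$. Since $\{v_k\}$ and $\{W_k\}$ are independent, each ergodic with invariant distributions $\mu$ and $\nu$ respectively, the joint process $\{Y_k\}$ is ergodic with invariant distribution $\mu \otimes \nu$, which verifies Assumption \ref{ergodic}. Defining
\[
f(\tilde{x},(v,w)) := -\sin(v)\left[\varphi^{*}+\frac{\varphi''}{2}\bigl(\tilde{x}+a\sin(v)\bigr)^{2}+w\right],
\]
the error recursion (\ref{es-err-sys}) becomes $\tilde{x}_{k+1}=\tilde{x}_k+\epsilon f(\tilde{x}_k,Y_{k+1})$, which is precisely of the form (\ref{sys-1}). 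Because $|\sin(v)|\le 1$ and $|w|\le M$, the function $f$ is continuous, bounded in $Y=(v,w)$ for each fixed $\tilde{x}$, and a polynomial of degree two in $\tilde{x}$ with coefficients uniformly bounded in $Y$. Hence $f$ is locally Lipschitz in $\tilde{x}$ uniformly in $Y$, verifying Assumption \ref{local-linear}.

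Next, I would confirm that integrating $f$ against $\mu\otimes\nu$ reproduces the discrete average system (\ref{es-err-avesys}). Using independence of $v$ and $w$ together with $\int\sin^{i}(v)\mu(dv)=0$ for $i=1,3$ (by oddness and the symmetry of the Gaussian $\mu$), every term except the one containing $2a\tilde{x}\sin^{2}(v)$ integrates to zero, leaving $\bar f(\tilde{x})=-\tfrac{a\varphi''(1-e^{-2\sigma^{2}})}{2}\tilde{x}$. The discrete average system is therefore the scalar linear recursion in (\ref{es-err-avesys}) with contraction factor $\alpha_{\epsilon}=1-\epsilon a\varphi''(1-e^{-2\sigma^{2}})/2$, and for any $\epsilon\in(0,\epsilon^{*})$ with $\epsilon^{*}=2/(a\varphi''(1-e^{-2\sigma^{2}}))$ one has $\alpha_{\epsilon}\in(-1,1)$, so the origin is globally exponentially stable.

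The conclusion is then a direct invocation of Theorem \ref{thm9}: the hypotheses apply on the admissible range $\epsilon\in(0,\min(\epsilon_{0},\epsilon^{*}))$, and since global exponential stability holds for the (scalar, linear) average system, Remark \ref{remark-global} allows the ball of initial conditions to be taken as all of $\mathbb{R}$. One may take $c_{\epsilon}=1$ and $\gamma_{\epsilon}=|\alpha_{\epsilon}|$ (or any slightly larger pair) and read off (\ref{static-result-1})–(\ref{static-result-2}) verbatim from (\ref{thm9-6})–(\ref{thm9-f}). I do not anticipate any real obstacle: the proof is essentially a verification of hypotheses and a moment-type computation of $\bar f$; the only conceptual step worth flagging is the passage from the bivariate perturbation $(v_{k+1},W_{k+1})$ to a single ergodic driver $Y_{k+1}$, which is what lets the general discrete-time averaging machinery of Theorem \ref{thm9} cover the multiplicative-noise extremum-seeking setting.
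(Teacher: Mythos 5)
Your proposal is correct and follows essentially the same route as the paper: compute the average of the error system using the moments of $\sin(v)$ under the Gaussian law and the independence of $\{v_k\}$ and $\{W_k\}$, observe that the resulting scalar linear average system is globally exponentially stable for $\epsilon\in(0,\epsilon^*)$, and invoke Theorem \ref{thm9} with Remark \ref{remark-global}. Your explicit treatment of the multiplicative noise via the joint ergodic driver $(v_{k+1},W_{k+1})$ is precisely the mechanism the paper appeals to through Remark \ref{remark-noise} and its product-measure computation of ${\rm Ave}\{\sin(v_{k+1})W_{k+1}\}$, so there is no substantive difference.
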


These two results imply that the norm of the error vector
$\tilde{x}_k$ exponentially converges, both almost surely
and in probability, to below an arbitrarily small residual value
$\delta$ over an arbitrarily long time interval, which tends to
infinity as $\epsilon$ goes to zero. To quantify the output convergence to the extremum, for any $\epsilon>0$, define a stopping time
$$
\tau_{\epsilon}^{\delta}=\inf\left\{k\in \mathbb{N}:|\tilde{x}_k|
>c_{\epsilon}\left|\tilde{x}_0\right|(\gamma_{\epsilon})^k+\delta\right\}.
$$
Then by (\ref{static-result-1}), we know that $\lim\limits_{\epsilon\to
0}\tau_{\epsilon}^{\delta}=+\infty,\ a.s.$ and
\begin{equation}\label{static-erroranalysis}
\left|\tilde{x}_k\right|\leq
c_{\epsilon}\left|\tilde{x}_0\right|(\gamma_{\epsilon})^k+\delta, \ \ \forall
k< \tau_{\epsilon}^{\delta}.
\end{equation}
  Since $y_{k+1}=\varphi(x^*+\tilde{x}_k
  +a\sin(v_{k+1}))+W_{k+1}$
and $\varphi^{'}(x^{*})=0$, we have
\begin{align*}
y_{k+1}-\varphi(x^*)&=\frac{\varphi^{''}(x^*)}{2}(\tilde{x}_k
+a\sin(v_{k+1}))^2\cr
&\quad\quad+O\left((\tilde{x}_k+a\sin(v_{k+1}))^3\right)+W_{k+1}.
\end{align*}
Thus by (\ref{static-erroranalysis}) and $|W_{k+1}|\leq M$, it holds that
\begin{eqnarray*}\label{static-erroranalysis-a}
|y_{k+1}-\varphi(x^*)|\leq O(a^2)+O(\delta^2)+C_{\epsilon}\left|\tilde{x}_0\right|^2(\gamma_{\epsilon})^{2k}
+M,  \forall k<\tau_{\epsilon}^{\delta},
\end{eqnarray*}
 for some positive constant $C_{\epsilon}$. Similarly,
by (\ref{static-result-2}),
\begin{eqnarray}\label{static-erroranalysis-b}
\lim_{\epsilon\to0}P\hspace{-3mm}&\left\{|y_{k+1}-\varphi(x^*)|\leq
O(a^2)+O(\delta^2)+C_{\epsilon}\left|\tilde{x}_0\right|^2(\gamma_{\epsilon})^{2k},\right.\cr
&\left. +M, \ \forall k=0,1,\ldots,[N/\epsilon]\right\}=1,
\end{eqnarray}
which implies that the output can exponentially approach to the extremum $\varphi(x^*)$ if $a$
is chosen sufficiently small and the measurement noise can be ignored $(M=0)$. By (\ref{es-err-avesys}), we can see that smaller $\epsilon$ is, slower the convergence rate of the average error system is. Thus, for this static map, the  parameter $\epsilon$ is designed to consider the tradeoff of the convergence rate and convergence precision.

\begin{remark}\label{remark-sm}
As an optimization method,  besides the different derivative estimation methods, there are some other differences between stochastic extremum seeking (SES) and stochastic approximation (SA)(\cite{Spall03, StaSti10}). First, in the iteration, the gain coefficients in SA is often changing with the iteration step, but for SES, the gain coefficient
is a small constant and denotes the amplitude of the excitation signal; Second, stochastic approximation may consider more kinds of measurement noise (i.e., martingale difference sequence, some kind of infinite correlated sequence), but here we assume the measurement noise as bounded ergodic stochastic sequence (the boundedness is to guarantee the existence of the integral in (\ref{a-bar});  Third, to prove the convergence of the algorithm ($P\{\lim_{k\to\infty}x_k=x^*\}=1$) , SA algorithm  requires some restrictions on the cost function (regression function) or the iteration sequence, while the convergence conditions of SES algorithm are simple and easy to verify.
\end{remark}

\begin{figure}[t]
\centering
\includegraphics*[width=7cm]{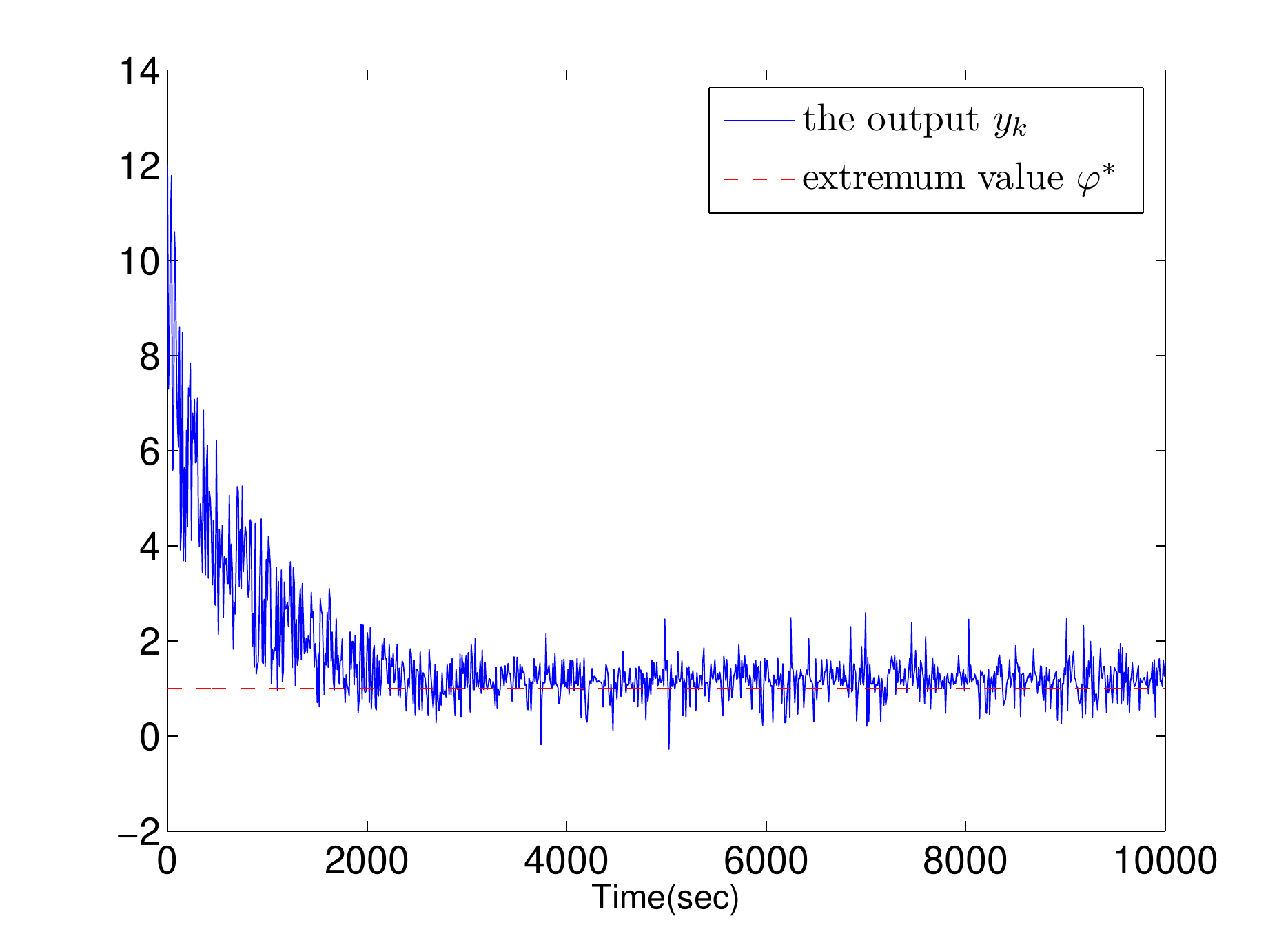}%
\hspace{8mm}%
\includegraphics*[width=7cm]{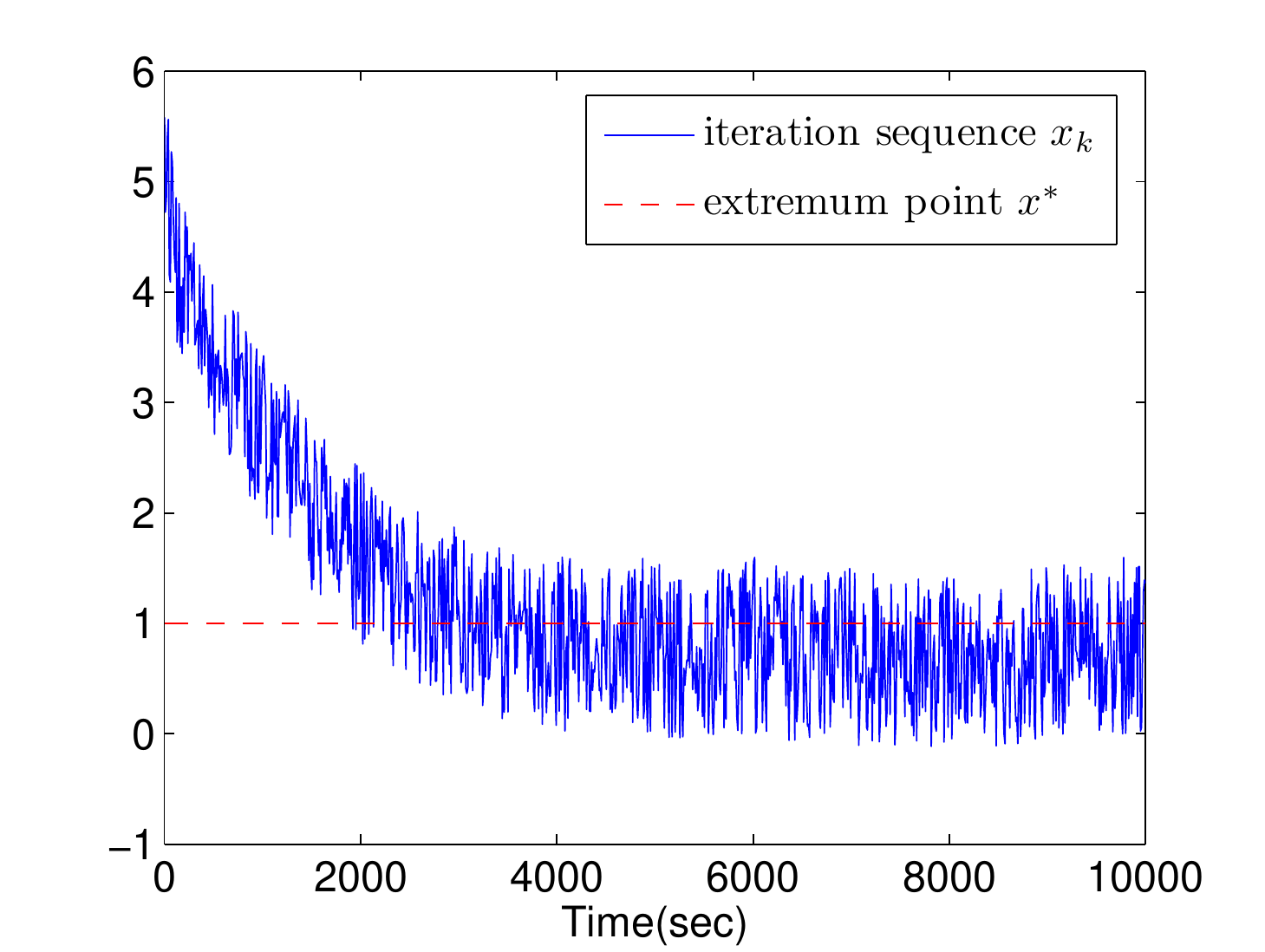}%
\caption{Discrete-time stochastic ES with independent Gaussian random variable sequence as the stochastic probing signal.}
 \label{fig-staticmap}
\end{figure}

Fig.\ref{fig-staticmap} displays the simulation results with $\varphi^*=1,\varphi''=1,x^*=1,$
in the static map (\ref{sm-function}) and $a=0.8,\epsilon=0.002$
in the parameter update law  (\ref{es-iteration}) and initial condition $\hat{x}_0=5$. The probing signal $\{v_k\}$ is taken as a sequence of i.i.d. gaussian random variables
with distribution $N(0,2^2)$
and the measurement noise $\{W_k\}$ is taken as a sequence of truncated i.i.d. gaussian random variables with distribution $N(0,0.2^2)$.

\section{Discrete-time Stochastic Extremum Seeking for Dynamic Systems}\label{sec-application-dynamic}
Consider a general  nonlinear
model
\begin{align}
x_{k+1}&=f(x_k,u_k),\\ y_k^0&=h(x_k), \ \ k=0,1,2,\ldots,
\end{align}
where $x_k\in\mathbb{R}^n$ is the state, $u_k\in\mathbb{R}$ is the
input, $y_k^0\in\mathbb{R}$ is the nominal output, and $f: \mathbb{R}^n\times
\mathbb{R}\to\mathbb{R}^n$ and $h:\mathbb{R}^n\to\mathbb{R}$ are
smooth functions. Suppose that we know a smooth control law
\begin{align}
u_k=\beta(x_k,\theta)
\end{align}
parameterized by a scalar parameter $\theta$. Then the closed-loop
system
\begin{eqnarray}\label{dyn-closedloop}
x_{k+1}=f(x_k,\beta(x_k,\theta))
\end{eqnarray}
 has equilibria parameterized by $\theta.$ We make the following assumptions about the
closed-loop system.

\begin{assumption}\label{dyn-es-A1}
 There exists a smooth function
$l:\mathbb{R}\to\mathbb{R}^n$ such that
\begin{align}
f(x_k,\beta(x_k,\theta))=x_k \ \ \mbox{ if and only if }\ \  x_k=l(\theta).
\end{align}
\end{assumption}

\begin{assumption}\label{dyn-es-A3}
 There exists $\theta^*\in\mathbb{R}$ such that
\begin{align}
(h\circ l)'(\theta^*)=0,\\
(h\circ l)''(\theta^*)<0.
\end{align}
\end{assumption}

Thus, we assume that the output equilibrium map $y=h(l(\theta))$ has
a local maximum at $\theta=\theta^*$.

Our objective is to develop a feedback mechanism which makes the
output equilibrium map $y=(h(l(\theta)))$ as close  as possible to
the maximum $y^*=h(l(\theta^*))$
 but without requiring the
knowledge of either $\theta^*$ or the functions $h$ and $l$. The only available information is the  output with measurement noise.

\begin{figure}[t]
\centering
\includegraphics*[width=7cm]{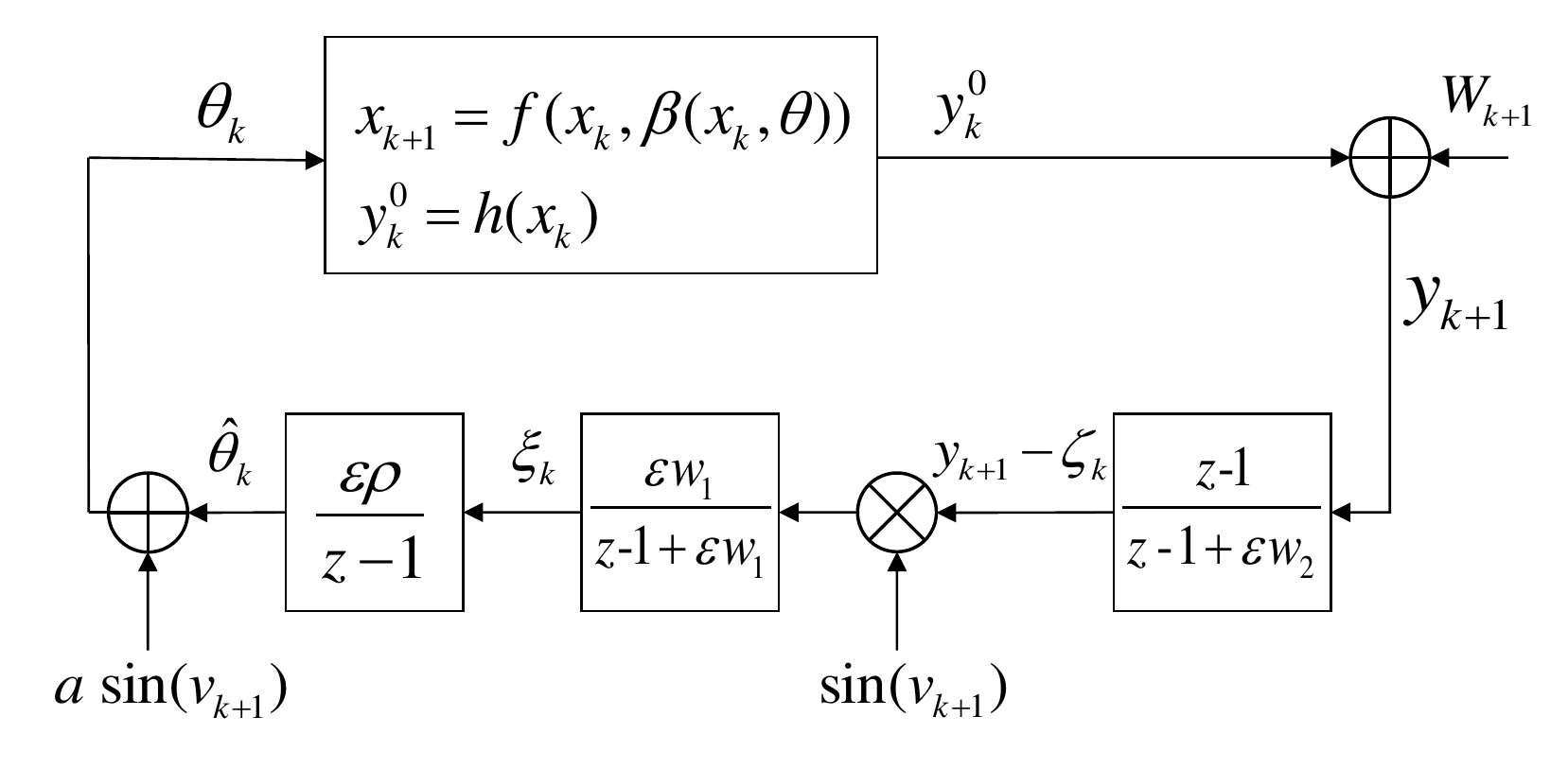}%
\hspace{1cm}%
\caption{Discrete-time stochastic extremum seeking scheme for nonlinear dynamics}
 \label{figure-dis-nonlinear}
\end{figure}

As discrete-time stochastic extremum seeking scheme in Fig. \ref{figure-dis-nonlinear}, we choose the parameter update law
\begin{align}
\hat{\theta}_{k+1}&=\hat{\theta}_k+\epsilon \varrho  \xi_k,\\
\xi_{k+1}&=\xi_k-\epsilon w_1\xi_k+\epsilon w_1(y_{k+1}-\zeta_k)\sin(v_{k+1}),\\
\zeta_{k+1}&=\zeta_k-\epsilon w_2\zeta_k+\epsilon w_2y_{k+1},\\
y_{k+1}&=y_k^0+W_{k+1},
\end{align}
where $\varrho>0,w_1>0,w_2>0,\epsilon>0$ are design parameters,
 $\{v_k\}$ is  assumed to be a sequence of i.i.d. Gaussian random variable  with  distribution $\mu(dx)=\frac{1}{\sqrt{2 \pi}\sigma}e^{-\frac{x^2}{2\sigma^2}}dx,$ and $W_k\triangleq(-M)\vee Z_k\wedge M$ is measurement noise, where $\{Z_k\}$ is  a sequence of i.i.d. Gaussian random variable  with distribution $\nu(dx)=\frac{1}{\sqrt{2 \pi}\sigma_1}e^{-\frac{x^2}{2\sigma_1^2}}dx$. We assume that the probing signal $\{v_k\}$  is independent of the measure noise process $\{W_k\}$. It is easy to verify that $\{W_k\}$ is a bounded and ergodic process with invariant distribution
\begin{equation}
\nu_1(A)=\nu(A\wedge(-M,M))+q_1+q_2, \mbox{ for any } \ A \subseteq \mathbb{R}
\end{equation}
where $q_1=\left\{
  \begin{array}{ll}
    \nu([M,+\infty)), & \mbox{ if } M\in A\\
    0, & \mbox{else}
  \end{array}
\right.$, and
$q_2=\left\{
  \begin{array}{ll}
    \nu((-\infty,-M]), & \mbox{ if } -M\in A \\
    0, & \mbox{ else}
  \end{array}
\right.$.

Define
\begin{align}
\theta_k=\hat{\theta}_k+a \sin(v_{k+1}).
\end{align}
Then we obtain the closed-loop system as
\begin{align}
x_{k+1}&=f(x_k,\beta(x_k,\hat{\theta}_k+a\sin(v_{k+1}))),\\
\hat{\theta}_{k+1}&=\hat{\theta}_k+\epsilon \varrho \xi_k,\\
\xi_{k+1}&=\xi_k-\epsilon w_1\xi_k+\epsilon w_1(y_k^0+W_{k+1}-\zeta_k)\sin(v_{k+1}),\\
\zeta_{k+1}&=\zeta_k-\epsilon w_2\zeta_k+\epsilon w_2(y_k^0+W_{k+1}).
\end{align}
With the error variable
\begin{align}
\tilde{\theta}_k&=\hat{\theta}_k-\theta^*,\\
\tilde{\zeta}_k&=\zeta_k-h(l(\theta^*)),
\end{align}
the closed-loop system is rewritten as
\begin{align}
x_{k+1}&=f(x_k,\beta(x_k,\tilde{\theta}_k+\theta^*+a\sin(v_{k+1}))),\label{reclosedloop-x}\\
\tilde{\theta}_{k+1}&=\tilde{\theta}_k+\epsilon \varrho \xi_k,\label{reclosedloop-theta}\\
\xi_{k+1}&=\xi_k-\epsilon w_1\xi_k+\epsilon w_1\left(h(x_k)-h(l (\theta^*))-\tilde{\zeta}_k+W_{k+1}\right)\sin(v_{k+1}),\label{reclosedloop-xi}\\
\tilde{\zeta}_{k+1}&=\tilde{\zeta}_k-\epsilon w_2\tilde{\zeta}_k+
\epsilon w_2\left(h(x_k)-h(l(\theta^*))+W_{k+1}\right).\label{reclosedloop-zeta}
\end{align}
We employ a singular perturbation reduction, freeze $x_k$ in (\ref{reclosedloop-x}) at its
quasi-steady state value as
\begin{equation}
x_k=l(\theta^*+\tilde{\theta}_k+a\sin(v_{k+1}))
\end{equation}
and substitute it into (\ref{reclosedloop-theta})-(\ref{reclosedloop-zeta}),
and then get the reduced system
\begin{align}
\tilde{\theta}^{\rm r}_{k+1}&=\tilde{\theta}^{\rm r}_k+\epsilon \varrho \xi^{\rm r}_k,\label{reducesys-theta}\\
\xi^{\rm r}_{k+1}&=\xi^{\rm r}_k-\epsilon w_1\xi^{\rm r}_k+\epsilon w_1\left(\varsigma(\tilde{\theta}^{\rm r}_k+a\sin(v_{k+1}))
-\tilde{\zeta}^{\rm r}_k+W_{k+1}\right)\sin(v_{k+1}),\label{reducesys-xi}\\
\tilde{\zeta}^{\rm r}_{k+1}&=\tilde{\zeta}^{\rm r}_k-\epsilon w_2\tilde{\zeta}^{\rm r}_k
+\epsilon w_2\left(\varsigma(\tilde{\theta}^{\rm r}_k+a\sin(v_{k+1}))+W_{k+1}\right).\label{reducesys-zeta}
\end{align}
where $\varsigma(\tilde{\theta}^{\rm r}_k+a\sin(v_{k+1}))\triangleq h\left(l(\theta^*+\tilde{\theta}^{\rm r}_k+a\sin(v_{k+1}))\right)-h(l(\theta^*))$.
With Assumption \ref{dyn-es-A3}, we have
\begin{align}
\varsigma(0)&=0,\label{v1}\\
\varsigma'(0)&=(h\circ l)'(\theta^*)=0,\label{v2}\\
\varsigma''(0)&=(h\circ l)''(\theta^*)<0.\label{v3}
\end{align}
Now we use our stochastic averaging theorems to analyze the reduced
system (\ref{reducesys-theta})-(\ref{reducesys-zeta}). According to (\ref{a-bar}),
  we obtain that the average system of (\ref{reducesys-theta})-(\ref{reducesys-zeta}) is
\begin{align}\label{avesys-nonlinear}
&\left[
                   \begin{array}{c}
                     \tilde{\theta}_{k+1}^{\rm r,ave}- \tilde{\theta}_{k}^{\rm r,ave}\\
                     \xi_{k+1}^{\rm r,ave}-\xi_{k}^{\rm r,ave}\\
                     \tilde{\zeta}_{k+1}^{\rm r,ave}-\tilde{\zeta}_{k}^{\rm r,ave}\\
                   \end{array}
                 \right] \cr
 & = \epsilon\left[
                   \begin{array}{c}
                     \varrho \xi_k^{\rm r,ave}\\
                    -w_1\xi_k^{\rm r,ave}+w_1
 \int_{S_v} \varsigma(\tilde{\theta}_k^{\rm r,ave}
 +a\sin(y))\sin(y)\mu(dy)\\
                     -w_2\tilde{\zeta}_k^{\rm r,ave}
 +w_2
 \int_{S_v} \varsigma(\tilde{\theta}_k^{\rm r,ave}
 +a\sin(y))\mu(dy)\\
                   \end{array}
                 \right],\cr
                 &
\end{align}
where we use the following facts:
\begin{equation}
\int_{S_W}x\nu_1(dx)=0,\quad\int_{S_W\times S_v}x\sin(y)\nu_1(dx)\times \mu(dy)=0.
\end{equation}
Now, we determine the average equilibrium $(\tilde{\theta}^{\rm a,e},\xi^{\rm a,e},\tilde{\zeta}^{\rm a,e})$ which satisfies
\begin{align}
\xi^{\rm a,e}&=0,\label{dyn-equ1}\\
 -w_1\xi^{\rm a,e}+w_1
 \int_{S_v} \varsigma(\tilde{\theta}^{\rm a,e}
 +a\sin(y))\sin(y)\mu(dy)&=0,\label{dyn-equ2}\\
  -w_2\tilde{\zeta}^{\rm a,e}
 +w_2
 \int_{S_v} \varsigma(\tilde{\theta}^{\rm a,e}
 +a\sin(y))\mu(dy)&=0.\label{dyn-equ3}
\end{align}
We assume that $\tilde{\theta}^{\rm a,e}$ has the form
\begin{align}
\tilde{\theta}^{\rm a,e}=b_1a+b_2a^2+O(a^3).\label{taylortheta}
\end{align}
By (\ref{v1}) and (\ref{v2}), define
\begin{align}
\varsigma(x)=\frac{\varsigma''(0)}{2}x^2+\frac{\varsigma'''(0)}{3!}x^3+O(x^4).\label{taylorvar}
\end{align}
Then substituting (\ref{taylortheta}) and (\ref{taylorvar}) into (\ref{dyn-equ2}) and noticing that $S_{v}=\mathbb{R}$, we
have
\begin{align}\label{calculate1}
&\int_{-\infty}^{+\infty}\varsigma(b_1a+b_2a^2+O(a^3)
+a\sin(y))\sin(y){1\over\sqrt{2\pi}\sigma}e^{-{y^2\over 2\sigma^2}}dy\nonumber\\
&= \int_{-\infty}^{+\infty}\left[{\varsigma''(0)\over 2}\left(
b_1a+b_2a^2+O(a^3)+a\sin(y)\right)^2\right.\cr
&\quad\left.+{\varsigma'''(0)\over
3!}\left(b_1a+b_2a^2+O(a^3)+a\sin(y)\right)^3\right.\cr
&\quad\left.+O((b_1a+b_2a^2+O(a^3)+a\sin(y))^4)\right]
\sin(y){e^{-{y^2\over
2\sigma^2}}\over\sqrt{2\pi}\sigma}dy\nonumber\\
 &=\int_{-\infty}^{+\infty}\left[{\varsigma''(0)\over 2}
(2b_1a^2+2b_2a^3+O(a^4))\sin^2(y)\right.\cr
&\quad\left.+{\varsigma'''(0)\over
3!}(3b_1^2a^3+O(a^4)
+a^3\sin^2(y))\sin^2(y)\right]\cr
&\quad\times{1\over\sqrt{2\pi}\sigma}e^{-{y^2\over
2\sigma^2}}dy+O(a^4)\cr
&=O(a^4)+\varsigma''(0)b_1\left(\frac12-\frac12e^{-2\sigma^2}\right)a^2\cr
&\quad+\left[\left(b_2\varsigma''(0)
+{\varsigma'''(0)\over
2}b_1^2\right)\left(\frac12-\frac12e^{-2\sigma^2}\right)\right.\cr
&\quad\left.+{\varsigma'''(0)\over
6}\left(\frac38-\frac12e^{-2\sigma^2}+\frac{1}{8}e^{-8\sigma^2}\right)\right]a^3
=0,
\end{align}
where the following facts are used:
\begin{align}
&{1\over
\sqrt{2\pi}\sigma}\int_{-\infty}^{+\infty}\sin^{2k+1}(y)e^{-{y^2\over
2\sigma^2}}dy=0,\ k=0,1,2, \ldots,\cr &{1\over
\sqrt{2\pi}\sigma}\int_{-\infty}^{+\infty}\sin^{2}(y)e^{-{y^2\over
2\sigma^2}}dy=\frac12-\frac12e^{-2\sigma^2},\cr &{1\over
\sqrt{2\pi}\sigma}\int_{-\infty}^{+\infty}\sin^{4}(y)e^{-{y^2\over
2\sigma^2}}dy=\frac38-\frac12e^{-2\sigma^2}+\frac{1}{8}e^{-8\sigma^2}.
\end{align}
Comparing the coefficients of the powers of $a$ on the right-hand
and left-hand sides of (\ref{calculate1}), we have
\begin{align}
b_1&=0,\\ b_{2}&=-{\varsigma'''(0)(3-4e^{-2\sigma^2}+e^{-8\sigma^2})\over
24\varsigma''(0)(1-e^{-2\sigma^2})},
\end{align}
and thus by (\ref{taylortheta}), we have
\begin{align}
\tilde{\theta}^{\rm a,e}=-{\varsigma'''(0)(3-4e^{-2\sigma^2}+e^{-8\sigma^2})\over
24\varsigma''(0)(1-e^{-2\sigma^2})}a^2+O(a^3).
\end{align}
From this equation, together with (\ref{dyn-equ3}), we have
\begin{align}
\tilde{\zeta}^{\rm a,e}&=\int_{-\infty}^{+\infty}\varsigma\left(\tilde{\theta}^{\rm a,e}
 +a\sin(y)\right){1\over \sqrt{2\pi}\sigma}e^{-{y^2\over 2\sigma^2}}dy\cr
 &=\int_{-\infty}^{+\infty}\varsigma\left(b_2a^2+O(a^3)
+a\sin(y)\right){e^{-{y^2\over 2\sigma^2}}\over \sqrt{2\pi}\sigma}dy\cr
 &=
\int_{-\infty}^{+\infty}\left[{\varsigma''(0)\over 2}\left(
b_2a^2+O(a^3)+a\sin(y)\right)^2\right.\cr
&\left.\quad+{\varsigma'''(0)\over
3!}\left(b_2a^2+O(a^3)+a\sin(y)\right)^3\right.\cr
&\quad\left.+O\left((b_2a^2+O(a^3)+a\sin(y))^4\right)\phantom{{\varsigma''(0)\over
2}}\hspace{-1cm}\right]{e^{-{y^2\over 2\sigma^2}}\over \sqrt{2\pi}\sigma}dy\cr
&={a^2\varsigma''(0)\over 2}\int_{-\infty}^{+\infty}\sin^2(y){1\over
\sqrt{2\pi}\sigma}e^{-{y^2\over 2\sigma^2}}dy+O(a^3)\cr
&={\varsigma''(0)(1-e^{-2\sigma^2})\over 4}a^2+O(a^3).
\end{align}
Thus the equilibrium of the average system (\ref{avesys-nonlinear})
is
\begin{align}
\left[
  \begin{array}{c}
    \tilde{\theta}^{\rm a,e} \\
    \xi^{\rm a,e} \\
    \tilde{\zeta}^{\rm a,e} \\
  \end{array}
\right]= \left[
  \begin{array}{c}
    -{\varsigma'''(0)(3-4e^{-2\sigma^2}+e^{-8\sigma^2})\over
24\varsigma''(0)(1-e^{-2\sigma^2})}a^2+O(a^3) \\
    0 \\
    {\varsigma''(0)(1-e^{-2\sigma^2})\over 4}a^2+O(a^3) \\
  \end{array}
\right].
\end{align}
The Jacobian matrix of the average system (\ref{avesys-nonlinear})
at the equilibrium
$(\tilde{\theta}^{\rm a,e},\xi^{\rm a,e},\tilde{\zeta}^{\rm a,e})$ is
\begin{eqnarray}\label{jacobi}
J_{\rm r}^{\rm a}=\left[
        \begin{array}{ccc}
          1 & \epsilon \rho & 0 \\
          \epsilon J_{\rm r21}^{\rm a}
           & 1-\epsilon w_1 & 0 \\
          \epsilon J_{\rm r31}^{\rm a}& 0 & 1-\epsilon w_2 \\
        \end{array}
      \right],
\end{eqnarray}
where
\begin{align}
J_{\rm r21}^{\rm a}&={w_1\over\sqrt{2\pi}\sigma}
          \int_{-\infty}^{+\infty}\varsigma'\left(\tilde{\theta}^{\rm a,e}
          +a\sin(y)\right)\sin(y)e^{-{y^2\over 2\sigma^2}}dy,\\
J_{\rm r31}^{\rm a}&={w_2\over\sqrt{2\pi}\sigma}\int_{-\infty}^{+\infty}\varsigma'
(\tilde{\theta}^{\rm a,e}
          +a\sin(y))e^{-{y^2\over 2\sigma^2}}dy.
\end{align}
Thus we have
\begin{align}\label{jacobi-det}
&det(\lambda I-J_{\rm r}^{\rm a})\cr
&=(\lambda-1+\epsilon w_2)\left((\lambda-1)^2+\epsilon w_1(\lambda-1)-\epsilon^2\rho J_{\rm r21}^{\rm a}\right).
\end{align}
With Taylor expansion and by calculating the integral, we get
\begin{align}\label{int}
\int_{-\infty}^{+\infty}&\varsigma'\left(\tilde{\theta}^{\rm a,e}+a\sin(y)\right)
          \sin(y)e^{-{y^2\over
          2\sigma^2}}dy\cr
         & =a\sqrt{2\pi}\sigma \varsigma''(0)\left(\frac12-\frac12e^{-2\sigma^2}\right)+O(a^2).
\end{align}
By substituting (\ref{int}) into (\ref{jacobi-det}) we get
\begin{align}
det(\lambda I-J_{\rm r}^{\rm a})&=(\lambda-1+\epsilon w_2)\left((\lambda-1)^2+\epsilon w_1(\lambda-1)\right.\cr
&\left.-\frac{\epsilon^2\rho w_1a}{2}\varsigma^{''}(0)(1-e^{-2\sigma^2})-\frac{\epsilon^2\rho w_1}{\sqrt{2\pi}\sigma}O(a^2) \right)\cr
&=(\lambda-1+\epsilon w_2)(\lambda-1-\Pi_1) (\lambda-1-\Pi_2).
\end{align}
where $\Pi_1=\epsilon\frac{-w_1+\sqrt{w_1^2+2\rho w_1 a \varsigma^{''}(0)(1-e^{-2\sigma^2})+\frac{4\rho w_1}{\sqrt{2\pi}\sigma}O(a^2)}}{2},$
$\Pi_2=\epsilon\frac{-w_1-\sqrt{w_1^2+2\rho w_1 a \varsigma^{''}(0)(1-e^{-2\sigma^2})+\frac{4\rho w_1}{\sqrt{2\pi}\sigma}O(a^2)}}{2}.$
Since $\varsigma^{''}(0)<0$, for sufficiently small $a$, $\sqrt{w_1^2+2\rho w_1 a \varsigma^{''}(0)(1-e^{-2\sigma^2})+\frac{4\rho w_1}{\sqrt{2\pi}\sigma}O(a^2)}$ can be smaller than $w_1$. Thus there exist $\epsilon_1^*>0$, such that for $\epsilon\in(0,\epsilon_1^*)$, the eigenvalues of the Jacobian matrix of the average system  (\ref{avesys-nonlinear}) are in the unit ball, and thus  the equilibrium of the average system is
exponentially stable. Then according to
Theorem \ref{thm9}, we have the following result for stochastic
extremum seeking algorithm in Fig. \ref{figure-dis-nonlinear}.

\begin{theorem}\label{theorem-nonlinear}
\ Consider the reduced system (\ref{reducesys-theta})-(\ref{reducesys-xi})-(\ref{reducesys-zeta}) under Assumption
\ref{dyn-es-A3}.
 Then there
exists a constant $a^*>0$ such that for any $0<a<a^*$
   there exist constants $r_{\epsilon}>0,c_{\epsilon}>0$ and $0<\gamma_{\epsilon}<1$
   such that for
  any initial condition
  $\left|\Delta_0^{\epsilon}\right|<r_{\epsilon}$, and any $\delta>0,$
\begin{eqnarray}\label{nonlinear-a}
\lim_{\epsilon\to0}\ \inf\left\{k\in\mathbb{N}: |\Delta^{\epsilon}_k|
>c_{\epsilon}|\Delta_0^{\epsilon}| (\gamma_{\epsilon})^k
+\delta\right\}=+\infty, \mbox{ a.s. }
\end{eqnarray}
and
\begin{eqnarray}\label{nonlinear-b}
\lim_{\epsilon\to0}P\left\{|\Delta^{\epsilon}_k|
\leq c_{\epsilon}|\Delta_0^{\epsilon}| (\gamma_{\epsilon})^k
+\delta,\forall k=0,1,\ldots,[N/\epsilon]\right\}=1,
\end{eqnarray}
 where
$\Delta^{\epsilon}_k\triangleq$
$(\tilde{\theta}_k^{\rm r},\xi_k^{\rm r},\tilde{\zeta}_k^{\rm r})$-
$\left(-{\varsigma'''(0)(3-4e^{-2\sigma^2}+e^{-8\sigma^2})\over
24\varsigma''(0)(1-e^{-2\sigma^2})}a^2+O(a^3),\right.$
   $ 0,$
   $ \left.{\varsigma''(0)(1-e^{-2\sigma^2})\over 4}a^2+O(a^3)\right)$, and $N$ is any natural number.
\end{theorem}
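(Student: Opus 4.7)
The plan is to reduce the claim to a direct application of Theorem~\ref{thm9}, with Remark~\ref{remark-noise} handling the measurement-noise term. First, I would shift coordinates so that the average equilibrium moves to the origin: set $\Delta_k^\epsilon := (\tilde\theta_k^{\rm r},\xi_k^{\rm r},\tilde\zeta_k^{\rm r}) - (\tilde\theta^{\rm a,e},0,\tilde\zeta^{\rm a,e})$, and rewrite the reduced system (\ref{reducesys-theta})--(\ref{reducesys-zeta}) as $\Delta_{k+1}^\epsilon = \Delta_k^\epsilon + \epsilon(F(\Delta_k^\epsilon,v_{k+1}) + G(\Delta_k^\epsilon,v_{k+1})W_{k+1})$ for appropriate $F,G$ built from $\varsigma$, $\sin(\cdot)$, and constants $\varrho,w_1,w_2$. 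In these shifted coordinates the discrete average system has an equilibrium at the origin.

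Next, I would verify the hypotheses of Theorem~\ref{thm9}. Assumption~\ref{local-linear} holds because $\varsigma=h\circ l(\theta^*+\cdot)-h(l(\theta^*))$ is smooth and $\sin(\cdot)$ is globally bounded, so $F$ and $G$ are continuous, bounded in the perturbation variable, and locally Lipschitz in the state uniformly in $v$. Assumption~\ref{ergodic} holds because $\{v_k\}$ is i.i.d. Gaussian (hence ergodic) and $\{W_k\}$ is a bounded truncated-Gaussian i.i.d. sequence (hence bounded and ergodic), independent of $\{v_k\}$; Remark~\ref{remark-noise} then lets us treat the joint process $(v_k,W_k)$ as the single ergodic driving noise of a system of the form (\ref{sys-1}). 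The invariant distribution is $\mu\times\nu_1$, and the averaged vector field is exactly what was computed in (\ref{avesys-nonlinear}), shifted to have its equilibrium at the origin.

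The third step is exponential stability of the shifted average system. This has already been done in the text via the Jacobian calculation: the eigenvalues of $J_{\rm r}^{\rm a}$ in (\ref{jacobi}) are $1-\epsilon w_2$ and $1+\Pi_1$, $1+\Pi_2$, and for sufficiently small $a<a^*$ and $\epsilon\in(0,\epsilon_1^*)$ all three lie strictly inside the unit disk. By Lyapunov's indirect method for discrete systems, the shifted origin is locally exponentially stable, yielding constants $r_\epsilon,c_\epsilon>0$ and $0<\gamma_\epsilon<1$ so that $|\bar\Delta_k^{\rm d}|\le c_\epsilon|\bar\Delta_0^{\rm d}|(\gamma_\epsilon)^k$ whenever $|\bar\Delta_0^{\rm d}|<r_\epsilon$. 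Applying Theorem~\ref{thm9} (with the $\epsilon$-dependent stability clause) to the shifted system gives (\ref{nonlinear-a}) and (\ref{nonlinear-b}) immediately, with $\Delta_k^\epsilon$ as defined in the statement.

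The main technical obstacle is making sure the stability constants behave well in the limit $\epsilon\to 0$. Since $J_{\rm r}^{\rm a}=I+\epsilon A(a)+o(\epsilon)$ with $A(a)$ having eigenvalues with negative real part for $a<a^*$, the eigenvalues of $J_{\rm r}^{\rm a}$ are of the form $1-\epsilon\lambda_i(a)+o(\epsilon)$ with $\mathrm{Re}\,\lambda_i(a)>0$, so $\gamma_\epsilon\to 1$ and the decay rate degrades as $\epsilon\to 0$; nevertheless, for each fixed $\epsilon\in(0,\epsilon_1^*)$ the discrete average system is exponentially stable, which is all that Theorem~\ref{thm9} requires in its $\epsilon$-dependent form. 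Thus the $\epsilon$-dependence of $r_\epsilon,c_\epsilon,\gamma_\epsilon$ in the conclusion is unavoidable but harmless for the weak-stability statement, and no uniform-in-$\epsilon$ stability of the average system is needed.
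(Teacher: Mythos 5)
Your proposal is correct and follows essentially the same route as the paper: the paper's ``proof'' is exactly the development preceding the theorem statement --- computing the average system (\ref{avesys-nonlinear}) of the reduced system with the joint ergodic process $(v_k,W_k)$ as the driving noise, locating its equilibrium, showing via the Jacobian (\ref{jacobi}) that its eigenvalues lie in the unit disk for small $a$ and $\epsilon$, and then invoking Theorem~\ref{thm9}. Your explicit coordinate shift, verification of Assumptions~\ref{local-linear}--\ref{ergodic}, and the closing observation that only the $\epsilon$-dependent (non-uniform) stability clause of Theorem~\ref{thm9} is needed are all consistent with, and slightly more explicit than, what the paper does.
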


These results imply  that the norm of the error vector $\Delta^{\epsilon}_k$ exponentially converges, both almost surely and in probability, to below an arbitrarily small residual value $\delta$
over an arbitrary large time interval, which tends to infinity as
the perturbation parameter $\epsilon$ goes to zero. In particular,
the $\tilde{\theta}^{\rm r}_k$-component of the error vector
converges to below $\delta$. To quantify the output convergence to
the extremum,
we define a stopping time
$$
\tau_{\epsilon}^{\delta}=\inf\left\{k\in \mathbb{N}:|\Delta^{\epsilon}_k|
>c_{\epsilon}\left|\Delta^{\epsilon}_0\right|(\gamma_{\epsilon})^k+\delta\right\}.
$$
Then by (\ref{nonlinear-a}) and the definition of
$\Delta^{\epsilon}_k$, we know that $\lim\limits_{\epsilon\to
0}\tau_{\epsilon}^{\delta}=+\infty,\ a.s.$ and for all $k<\tau_{\epsilon}^{\delta}$
\begin{align}
&\left|\tilde{\theta}^{\rm r}_k-\left(-{v'''(0)(3-4e^{-q^2}+e^{-4q^2})\over
24v''(0)(1-e^{-q^2})}a^2+O(a^3)\right)\right|\leq
c_{\epsilon}\left|\Delta^{\epsilon}_0\right|(\gamma_{\epsilon})^k+\delta,
\end{align}
   which implies that
\begin{eqnarray}\label{erroranalysis}
   \left|\tilde{\theta}^{\rm r}_k\right|\leq O(a^2)+
c_{\epsilon}\left|\Delta^{\epsilon}_0\right|(\gamma_{\epsilon})^k+\delta, \ \ \
\forall k<\tau_{\epsilon}^{\delta}.
   \end{eqnarray}
Since the nominal output $y_k^0=h(l(\theta^*+\tilde{\theta}^{\rm r}_k+a\sin(v_{k+1})))$ and
$(h\circ l)^{'}(\theta^{*})=0$, we have
\begin{align*}
y_k^0-h(l(\theta^*))&=\frac{(h\circ l)^{''}(\theta^*)}{2}
(\tilde{\theta}^{\rm r}_k+a\sin(v_{k+1}))^2+O\left((\tilde{\theta}^{\rm r}_k+a\sin(v_{k+1}))^3\right).
\end{align*}
Thus by (\ref{erroranalysis}), it holds that
\begin{eqnarray*}
|y_k^0-h\circ l(\theta^*)|\leq O(a^2)+O(\delta^2)+C_{\epsilon}\left|\Delta^{{\epsilon}}_0\right|^2(\gamma_{\epsilon})^{2k}, \ \
\forall k< \tau_{\epsilon}^{\delta},
\end{eqnarray*}
 for some positive constant $C_{\epsilon}$. Similarly, by (\ref{nonlinear-b})
 \begin{eqnarray*}
\lim_{\epsilon\to0}\hspace{-3mm}&P\left\{|y_k^0-h\circ l(\theta^*)|\leq
O(a^2)+O(\delta^2)+C_{\epsilon}\left|\Delta^{{\epsilon}}_0\right|^2(\gamma_{\epsilon})^{2k},\right.\cr
&
\left.\forall k=0,1,\ldots,[N/\epsilon]\right\}=1.
\end{eqnarray*}
With the measurement noise considered, we obtain that
\begin{align*}
|y_{k+1}-h\circ l(\theta^*)|&\leq O(a^2)+O(\delta^2)+C_{\epsilon}\left|\Delta^{{\epsilon}}_0\right|^2(\gamma_{\epsilon})^{2k}\cr
&\quad\quad+M, \ \ \forall k< \tau_{\epsilon}^{\delta},\nonumber
\end{align*}
 for some positive constant $C_{\epsilon}$, and moreover,
 \begin{eqnarray*}
\lim_{\epsilon\to0}\hspace{-3mm}&P\left\{|y_{k+1}-h\circ l(\theta^*)|\leq
O(a^2)+O(\delta^2)+C_{\epsilon}\left|\Delta^{{\epsilon}}_0\right|^2(\gamma_{\epsilon})^{2k}\right.\cr
&\left.+M,\  \forall k=0,1,\ldots,[N/\epsilon]\right\}=1.
\end{eqnarray*}

\begin{remark}
For stochastic ES scheme for dynamical systems with output equilibrium map, we focus on the stability of the reduced system. Different from the deterministic ES case (periodic probing signal), the closed-loop system (\ref{reclosedloop-x})-(\ref{reclosedloop-zeta}) has two perturbations (small parameter $\epsilon$ and stochastic perturbation $\{v_{k}\}$) and thus generally, there is  no equilibrium solution or periodic solution. So we can not analyze the solution property of the closed-loop system by general singular perturbation methods for both deterministic systems (\cite{Kha02}) and stochastic systems (\cite{Soc98}). But for the reduced system (parameter estimation error system when the state is at its quasi-steady state value), we can analyze the solution property by our developed averaging theory to obtain the approximation to the maximum of output equilibrium map.
\end{remark}

\section{Concluding remarks}\label{sec-conclusion}
In this paper, we develop discrete-time stochastic averaging theory and apply it to analyze the convergence of  our proposed stochastic discrete-time extremum seeking algorithms. Our results of stochastic averaging extend the existing  discrete-time averaging theorems for globally Lipschitz systems to locally Lipschitz systems. Compared with other stochastic optimization methods, e.g., stochastic approximation, simulated annealing method and genetic algorithm, the convergence conditions of  discrete-time stochastic extremum seeking algorithm are easier to verify and clearer. Compared with continuous-time stochastic extremum seeking, in the discrete-time case, we consider the bounded measurement noise.
In our results, we can only prove the weaker convergence than the convergence with probability one of classical stochastic approximation. Better convergence of algorithms and improved algorithms are our future work directions. For dynamical systems, we only focus on the stability of parameter estimation error system at the quasi-steady state value (the reduced system). For the whole closed-loop system with extremum seeking controller, we will investigate the proper singular perturbation method in future work.

\paragraph*{Acknowledgment.}
The research was supported by  National Natural Science Foundation of
China (No. 61174043,61322311),  FANEDD, and Natural Science Foundation of
Jiangsu Province (No.BK2011582), NCET-11-0093.

\appendix
 \setcounter{equation}{0}
\renewcommand{\theequation}{A.\arabic{equation}}
\section{Proofs of the General Theorems on Discrete-time Stochastic Averaging}    \label{sec-proofs}
\subsection{Proof of Lemma \ref{lemma1}: approximation in finite-time interval with continuous average system }\label{sec-pro-lem1}


Fix $T>0$ and define
\begin{align}\label{proof-lem1-a}
M'=\sup_{0\leq t\leq T}|\bar{X}^{\rm c}(t)|.
\end{align}
Since $(\bar{X}^{\rm c}(t),t\geq 0)$ is continuous and $[0,T]$ is a compact set, we have that $M'<+\infty$. Denote $M=M'+1$. For any $\epsilon\in (0,\epsilon_0)$, define a stopping time $\tau_{\epsilon}$ by
\begin{align}\label{proof-lem1-b}
\tau_{\epsilon}=\inf\{t\geq 0:|X(t)|>M\}.
\end{align}
By the definition of $M$ (noting that $|x|=|X_0|=|\bar{X}^{\rm c}(0)|\leq M'$), we know that $0<\tau_{\epsilon}\leq +\infty$. If $\tau_{\epsilon}<+\infty$, then by the definition of $\tau_{\epsilon}$, we know that for any $s<\tau_{\epsilon}$, $|X(s)|\leq M$. By Assumption 1, we know that
there exists a positive constant $C_M$ such that for any $|x|\leq M$ and any $y$, we have $|f(x,y)|\leq C_M.$ And thus by (\ref{sys-1}), we know that
\begin{align}\label{proof-lem1-c}
M\leq |X({\tau_{\epsilon}})|\leq M+\epsilon C_M\leq M+\epsilon_0 C_M.
\end{align}
Denote $\bar{M}=M+\epsilon_0 C_M$. By Assumption 1 again, we know that
there exists a positive constant $C_{\bar{M}}$ such that for any $|x|\leq \bar{M}$ and any $y$, we have $|f(x,y)|\leq C_{\bar{M}}.$ It follows by (\ref{a-bar}) that for any $|x|\leq \bar{M}$, $|\bar{f}(x)|\leq C_{\bar{M}}$.

From (\ref{sys-2}) and (\ref{avesys-2'}), we have that, for any $t\geq 0$,
\begin{align}\label{proof-lem1-d}
X(t)-\bar{X}^{\rm c}(t)&=\int_0^t [f(X(s),Y(\epsilon+s))-\bar{f}(\bar{X}^{\rm c}(s))]ds\cr
&\quad-
\int^t_{t_{m(t)}}f(X(s),Y(\epsilon+s))ds\cr
&=\int_0^t [f(X(s),Y(\epsilon+s))-f(\bar{X}^{\rm c}(s),Y(\epsilon+s))]ds\cr
&\quad+
\int_0^t [f(\bar{X}^{\rm c}(s),Y(\epsilon+s))-\bar{f}(\bar{X}^{\rm c}(s))]ds
-\cr
&\quad\quad \int^t_{t_{m(t)}}f(X(s),Y(\epsilon+s))ds.
\end{align}
By Assumption 1 and the definition of $\bar{f}$, there exists a positive constant  $K_{\bar{M}}$ such that for any $x_1,x_2$ in the subset  $D_{\bar{M}}:=\{x\in \mathbb{R}^n:|x|\leq \bar{M}\}$ of $\mathbb{R}^n$, and any $y\in \mathbb{R}^m$, we have
\begin{align}
|f(x_1,y)-f(x_2,y)|&\leq K_{\bar{M}}|x_1-x_2|,\label{proof-lem1-e}\\
|\bar{f}(x_1)-\bar{f}(x_2)|&\leq K_{\bar{M}}|x_1-x_2|.\label{proof-lem1-e-1}
\end{align}
By (\ref{proof-lem1-d}), (\ref{proof-lem1-e}) and (\ref{proof-lem1-e-1}), we have that if $t\leq \tau_{\epsilon}\wedge T$, then
\begin{align}\label{proof-lem1-f}
|X(t)-\bar{X}^{\rm c}(t)|&\leq K_{\bar{M}}\int_0^t |X(s)-\bar{X}^{\rm c}(s)|ds\cr
&\quad+
\left|\int_0^t [f(\bar{X}^{\rm c}(s),Y(\epsilon+s))-\bar{f}(\bar{X}^{\rm c}(s))]ds\right|\cr
&\quad
+\left|\int_{t_{m(t)}}^t f(X(s),Y(\epsilon+s))ds\right|.
\end{align}
Define
\begin{align}
\Delta_t&=|X(t)-\bar{X}^{\rm c}(t)|,\label{proof-lem1-g}\\
\alpha(\epsilon)&=\sup_{0\leq t \leq T}\left|\int_0^t [f(\bar{X}^{\rm c}(s),Y(\epsilon+s))-\bar{f}(\bar{X}^{\rm c}(s))]ds\right|,\label{proof-lem1-h}\\
\beta(\epsilon)&=\sup_{0\leq t \leq\tau_{\epsilon}\wedge  T}\left|\int_{t_{m(t)}}^t f(X(s),Y(\epsilon+s))ds\right|\label{proof-lem1-i}.
\end{align}
Then by (\ref{proof-lem1-f}) and Gronwall's inequality, we have
\begin{align}\label{proof-lem1-j}
\sup_{0\leq t\leq \tau_{\epsilon}\wedge T}\Delta_t&\leq (\alpha(\epsilon)+\beta(\epsilon))e^{K_{\bar{M}}(\tau_{\epsilon}\wedge T)}\cr
&\leq (\alpha(\epsilon)+\beta(\epsilon))e^{K_{\bar{M}}T}.
\end{align}
Since for any $t\geq 0$, we have $t-t_{m(t)}\leq \epsilon$, and thus $\beta(\epsilon)\leq C_{\bar{M}}\epsilon$. Hence
\begin{align}\label{proof-lem1-k}
\lim_{\epsilon\to 0}\beta(\epsilon)=0.
\end{align}
In the following, we  prove
 that $\lim_{\epsilon\to 0}\alpha(\epsilon)=0\ a.s.$, i.e.
\begin{align}\label{proof-lem1-l}
\lim_{\epsilon\to 0}\sup_{0\leq t \leq T}\left|\int_0^t  [f(\bar{X}^{\rm c}(s),Y(\epsilon+s))-\bar{f}(\bar{X}^{\rm c}(s))]ds\right|=0\ a.s.
\end{align}
For any $n\in \mathbb{N}$, define a function $\bar{X}^n(s),s\geq 0$, by
\begin{align}\label{proof-lem1-m}
\bar{X}^n(s)=\sum_{k=0}^{\infty}\bar{X}^{\rm c}({k\over n})I_{\{{k\over n}\leq s<{k+1\over n}\}}.
\end{align}
Then for any $n\in \mathbb{N}$, we have
\begin{align}\label{proof-lem1-n}
\sup_{0\leq s\leq T}|\bar{X}^n(s)|\leq \sup_{0\leq s\leq T}|\bar{X}^{\rm c}(s)|= M'<\bar{M}.
\end{align}
By (\ref{proof-lem1-e}), (\ref{proof-lem1-e-1}), (\ref{proof-lem1-m}) and (\ref{proof-lem1-n}), we obtain that
\begin{align}\label{proof-lem1-o}
&\sup_{0\leq t \leq T}\left|\int_0^t  [f(\bar{X}^{\rm c}(s),Y(\epsilon+s))-\bar{f}(\bar{X}^{\rm c}(s))]ds\right|\cr
&=\sup_{0\leq t \leq T}\left|\int_0^t  \left\{[f(\bar{X}^{\rm c}(s),Y(\epsilon+s))-f(\bar{X}^n(s),Y(\epsilon+s))]\right.\right.\cr
&\quad\left.\left.+[f(\bar{X}^n(s),Y(\epsilon+s))-\bar{f}(\bar{X}^n(s))]\right.\right.\cr
&\quad\left.\left.+[\bar{f}(\bar{X}^n(s))-\bar{f}(\bar{X}^{\rm c}(s))]\right\}ds\right|\cr
&\leq \sup_{0\leq t \leq T}\int_0^t  \left|f(\bar{X}^{\rm c}(s),Y(\epsilon+s))-f(\bar{X}^n(s),Y(\epsilon+s))\right|ds\cr
&\quad+\sup_{0\leq t \leq T}\left|\int_0^t\left(f(\bar{X}^n(s),Y(\epsilon+s))-
\bar{f}(\bar{X}^n(s))\right)ds\right|\cr
&\quad+\sup_{0\leq t \leq T}\int_0^t\left|\bar{f}(\bar{X}^n(s))-\bar{f}(\bar{X}^{\rm c}(s))\right|ds\cr
&\leq 2K_{\bar{M}}T\sup_{0\leq t\leq T}|\bar{X}^{\rm c}(s)-\bar{X}^n(s)|\cr
&\quad+\sup_{0\leq t \leq T}\left|\int_0^t\left(f(\bar{X}^n(s),Y(\epsilon+s))-
\bar{f}(\bar{X}^n(s))\right)ds\right|.
\end{align}
Next, we focus on the second term on the right-hand side of (\ref{proof-lem1-o}). We have
\begin{align}\label{proof-lem1-p}
&\sup_{0\leq t \leq T}\left|\int_0^t\left(f(\bar{X}^n(s),Y(\epsilon+s))-
\bar{f}(\bar{X}^n(s))\right)ds\right|\cr
&=\sup_{0\leq t \leq T}\left|\int_0^t\left(f(\bar{X}^n(s),Y(\epsilon+s))-
\bar{f}(\bar{X}^n(s))\right)\sum_{k=0}^{\infty}I_{\{{k\over n}\leq s<{(k+1)\over n}\}}ds\right|\cr
&=\sup_{0\leq t \leq T}\left|\int_0^t\sum_{k=0}^{\infty}
\left(f(\bar{X}^{\rm c}({k\over n}),Y(\epsilon+s))-
\bar{f}(\bar{X}^{\rm c}({k\over n}))\right) I_{\{{k\over n}\leq s<{k+1\over n}\}}ds\right|\cr
&=\sup_{0\leq t \leq T}\left|\sum_{k=0}^{n([t]+1)}\int_{{k\over n}\wedge t}^{{(k+1)\over n}\wedge t}\left(f(\bar{X}^{\rm c}({k\over n}),Y(\epsilon+s))-\bar{f}(\bar{X}^{\rm c}({k\over n}))\right)ds\right|\cr
&\leq \sup_{0\leq t \leq T}\sum_{k=0}^{n([t]+1)}\left|\int_{{k\over n}\wedge t}^{{(k+1)\over n}\wedge t}\left(f(\bar{X}^{\rm c}({k\over n}),Y(\epsilon+s))-\bar{f}(\bar{X}^{\rm c}({k\over n}))\right)ds\right|,\nonumber\\
&
\end{align}
where $[t]$ is the largest integer not greater than $t$. For fixed $n$ and $k$ with $k\leq n([T]+1)$, we have
\begin{align}\label{proof-lem1-q}
&\sup_{0\leq t \leq T}\left|\int_{{k\over n}\wedge t}^{{k+1\over n}\wedge t}\left(f(\bar{X}^{\rm c}({k\over n}),Y(\epsilon+s))-
\bar{f}(\bar{X}^{\rm c}({k\over n}))\right)ds\right|\cr
&\leq \sup_{0\leq t \leq T}\left(\left|\int_0^{{k+1\over n}\wedge t}\left(f(\bar{X}^{\rm c}({k\over n}),Y(\epsilon+s))-
\bar{f}(\bar{X}^{\rm c}({k\over n}))\right)ds\right|\right.\cr
&\quad+\left.\left|\int_{0}^{{k\over n}\wedge t}\left(f(\bar{X}^{\rm c}({k\over n}),Y(\epsilon+s))-
\bar{f}(\bar{X}^{\rm c}({k\over n}))\right)ds\right|\right)\cr
&=2\sup_{0\leq t\leq {k+1\over n}}\left|\int_{0}^{t_{m(t)}}\left(f(\bar{X}^{\rm c}({k\over n}),Y(\epsilon+s))-
\bar{f}(\bar{X}^{\rm c}({k\over n}))\right)ds\right.\cr
&\quad \left.+
\int_{t_{m(t)}}^t\left(f(\bar{X}^{\rm c}({k\over n}),Y(\epsilon+s))-
\bar{f}(\bar{X}^{\rm c}({k\over n}))\right)ds\right|\cr
&\leq 2\sup_{0\leq t\leq {k+1\over n}}\left|\int_{0}^{t_{m(t)}}\left(f(\bar{X}^{\rm c}({k\over n}),Y(\epsilon+s))-
\bar{f}(\bar{X}^{\rm c}({k\over n}))\right)ds\right|\cr
&\quad\quad+4C_{\bar{M}}\epsilon.
\end{align}
For the second term on the right-hand side of (\ref{proof-lem1-q}), we have
\begin{align}\label{proof-lem1-r}
&\int_{0}^{t_{m(t)}}\left(f(\bar{X}^{\rm c}({k\over n}),Y(\epsilon+s))-
\bar{f}(\bar{X}^{\rm c}({k\over n}))\right)ds\cr
&=\epsilon\sum_{i=0}^{[t/\epsilon]-1}\left(f(\bar{X}^{\rm c}({k\over n}),Y(i+1))-
\bar{f}(\bar{X}^{\rm c}({k\over n}))\right)\cr
&=\epsilon[t/\epsilon]\frac{1}{[t/\epsilon]}
\sum_{i=0}^{[t/\epsilon]-1}\left(f(\bar{X}^{\rm c}({k\over n}),Y(i+1))-
\bar{f}(\bar{X}^{\rm c}({k\over n}))\right)\cr
&=\epsilon[t/\epsilon]\left(\frac{1}{[t/\epsilon]}
\sum_{i=0}^{[t/\epsilon]-1}f(\bar{X}^{\rm c}({k\over n}),Y(i+1))-
\bar{f}(\bar{X}^{\rm c}({k\over n}))\right)\cr
&
\end{align}
Then by  (\ref{proof-lem1-r}), the Birkhoff's ergodic theorem and \cite[Problem 5.3.2]{LipShi89}, we obtain that
\begin{align}\label{proof-lem1-s}
\lim_{\epsilon\to 0}\sup_{0\leq t\leq {k+1\over n}}&\left|\int_{0}^{t_{m(t)}}\left(f(\bar{X}^{\rm c}({k\over n}),Y(\epsilon+s))-
\bar{f}(\bar{X}^{\rm c}({k\over n}))\right)ds\right|\cr
&\quad\quad\quad=0\ \  a.s.,
\end{align}
which together with (\ref{proof-lem1-p}) and (\ref{proof-lem1-q}) implies that for any $n\in \mathbb{N}$,
\begin{align}\label{proof-lem1-t}
\lim_{\epsilon\to 0}\sup_{0\leq t \leq T}\left|\int_0^{t}\left(f(\bar{X}^n(s),Y(\epsilon+s))-
\bar{f}(\bar{X}^n(s))\right)ds\right|=0\ \ a.s.,
\end{align}
Thus by (\ref{proof-lem1-o}), (\ref{proof-lem1-t}) and
\begin{align}\label{proof-lem1-u}
\lim_{n\to\infty}\sup_{0\leq s\leq T}|\bar{X}^{\rm c}(s)-\bar{X}^n(s)|=0,
\end{align}
we obtain $\lim_{n\to\infty}\sup_{0\leq t \leq T}\left|\int_0^{t}  [f(\bar{X}^{\rm c}(s),Y(\epsilon+s))-\bar{f}(\bar{X}^{\rm c}(s))]\right.$ $ds\Big|=0\ \ a.s.$, i.e.
\begin{align}\label{proof-lem1-v}
\lim_{\epsilon\to 0}\alpha(\epsilon)=0\ \ a.s.
\end{align}

By (\ref{proof-lem1-g}), (\ref{proof-lem1-j}), (\ref{proof-lem1-k}) and
 (\ref{proof-lem1-v}),
we have
\begin{align}\label{proof-lem1-w}
\limsup_{\epsilon\to 0}\sup_{0\leq t\leq \tau_{\epsilon}\wedge
T}|X(t)-\bar{X}^{\rm c}(t)|=0\ \ a.s.
\end{align}
By (\ref{proof-lem1-a}) and (\ref{proof-lem1-w}), we have
\begin{align}\label{proof-lem1-x}
&\limsup_{\epsilon\to 0}\sup_{0\leq t\leq \tau_{\epsilon}\wedge
T}|X(t)|\cr
&\leq \limsup_{\epsilon\to 0}\left(\sup_{0\leq t\leq \tau_{\epsilon}\wedge
T}|X(t)-\bar{X}^{\rm c}(t)|+\sup_{0\leq t\leq \tau_{\epsilon}\wedge
T}|\bar{X}^{\rm c}(t)|\right)\cr
&\leq \limsup_{\epsilon\to 0}\sup_{0\leq t\leq \tau_{\epsilon}\wedge
T}|X(t)-\bar{X}^{\rm c}(t)|+M'\cr
&=M'<M\ a.s.
\end{align}
By (\ref{proof-lem1-c}) and (\ref{proof-lem1-x}), we obtain that, for almost every $\omega\in\Omega$, there exists an $\epsilon_0(\omega)$ such that for any $0<\epsilon<\epsilon_0(\omega)$,
\begin{equation}\label{proof-lem1-y}
\tau_{\epsilon}(\omega)>T.
\end{equation}
Thus by (\ref{proof-lem1-w}) and (\ref{proof-lem1-y}), we obtain that
\begin{equation}\label{proof-lem1-z}
\limsup_{\epsilon\to 0}\sup_{0\leq t\leq T}|X(t)-\bar{X}^{\rm c}(t)|=0\ a.s.
\end{equation}
Hence (\ref{lem-a}) holds. The proof is completed.

\subsection{Proof of Lemma \ref{lemma2}: approximation for finite-time interval with discrete average system}\label{sec-pro-lem2}
By Lemma 1, we need only to prove that
\begin{align}\label{proof-lem2-a}
\lim_{\epsilon\to 0}\sup_{0\leq t\leq
T}|\bar{X}^{\rm d}(t)-\bar{X}^{\rm c}(t)|=0.
\end{align}
Let $M',M,C_M,\bar{M},C_{\bar{M}},K_{\bar{M}}$ be defined in the above proof of Lemma 1.
For any $\epsilon\in (0,\epsilon_0)$, define a  time $\tau^{\rm d}_{\epsilon}$ by
\begin{align}\label{proof-lem2-b}
\tau^{\rm d}_{\epsilon}=\inf\{t\geq 0:|\bar{X}^{\rm d}(t)|>M\}.
\end{align}
By the definition of $M$ (noting that $|x|=|\bar{X}^{\rm d}_0|=|\bar{X}^{\rm c}(0)|\leq M'$), we know that $0<\tau^{\rm d}_{\epsilon}\leq +\infty$. If $\tau^{\rm d}_{\epsilon}<+\infty$, then by the definition of $\tau^{\rm d}_{\epsilon}$, we know that for any $s<\tau^{\rm d}_{\epsilon}$, $|\bar{X}^{\rm d}(s)|\leq M$.
By (\ref{avesys-1}), we know that
\begin{align}\label{proof-lem2-c}
M\leq |\bar{X}^{\rm d}({\tau^{\rm d}_{\epsilon}})|\leq M+\epsilon C_M\leq M+\epsilon_0 C_M.
\end{align}

Noting that if $t\leq \tau^{\rm d}_{\epsilon}\wedge T$, then
\begin{align}\label{proof-lem2-d}
|\bar{X}^{\rm c}(t)|\leq \bar{M},\ |\bar{X}^{\rm d}(t)|\leq \bar{M},
\end{align}
and for any $x_1,x_2$ in the subset  $D_{\bar{M}}:=\{x\in \mathbb{R}^n:|x|\leq \bar{M}\}$ of $\mathbb{R}^n$,  we have
\begin{align}\label{proof-lem2-e}
|\bar{f}(x_1)-\bar{f}(x_2)|&\leq K_{\bar{M}}|x_1-x_2|\ \mbox{and}\ |\bar{f}(x_1)|\leq C_{\bar{M}}.
\end{align}

By (\ref{avesys-2}) and (\ref{avesys-2'}), we have
\begin{align}\label{proof-lem2-f}
\bar{X}^{\rm d}(t)-\bar{X}^{\rm c}(t)=&\int_0^t (\bar{f}(\bar{X}^{\rm d}(s))-\bar{f}(\bar{X}^{\rm c}(s)))ds-\int^t_{t_{m(t)}}\bar{f}(\bar{X}^{\rm d}(s))ds.
\end{align}
Then by (\ref{proof-lem2-d})-(\ref{proof-lem2-f}) and the fact that $t-t_{m(t)}\leq \epsilon$, we obtain that for any $0\leq t\leq \tau^{\rm d}_{\epsilon}\wedge T$,
\begin{align}\label{proof-lem2-g}
|\bar{X}^{\rm d}(t)-\bar{X}^{\rm c}(t)|\leq K_{\bar{M}}\int_0^t |\bar{X}^{\rm d}(s))-\bar{X}^{\rm c}(s))|ds+C_{\bar{M}}\epsilon.
\end{align}
By (\ref{proof-lem2-g}) and the Gronwall's inequality, we get
\begin{align}\label{proof-lem2-h}
\sup_{0\leq t\leq \tau^{\rm d}_{\epsilon}\wedge T}|\bar{X}^{\rm d}(t)-\bar{X}^{\rm c}(t)|\leq C_{\bar{M}}\epsilon\exp(K_{\bar{M}}T),
\end{align}
which implies that
\begin{align}\label{proof-lem2-i}
\lim_{\epsilon\to 0}\sup_{0\leq t\leq \tau^{\rm d}_{\epsilon}\wedge T}|\bar{X}^{\rm d}(t)-\bar{X}^{\rm c}(t)|=0.
\end{align}
By (\ref{proof-lem1-a}) and (\ref{proof-lem2-i}), we have
\begin{align}\label{proof-lem2-j}
&\limsup_{\epsilon\to 0}\sup_{0\leq t\leq \tau^{\rm d}_{\epsilon}\wedge
T}|\bar{X}^{\rm d}(t)|\cr
&\leq \limsup_{\epsilon\to 0}\left(\sup_{0\leq t\leq \tau^{\rm d}_{\epsilon}\wedge
T}|\bar{X}^{\rm d}(t)-\bar{X}^{\rm c}(t)|+\sup_{0\leq t\leq \tau^{\rm d}_{\epsilon}\wedge
T}|\bar{X}^{\rm c}(t)|\right)\cr
&\leq \limsup_{\epsilon\to 0}\sup_{0\leq t\leq \tau^{\rm d}_{\epsilon}\wedge
T}|\bar{X}^{\rm d}(t)-\bar{X}^{\rm c}(t)|+M'\cr
&=M'<M.
\end{align}
By (\ref{proof-lem2-c}) and (\ref{proof-lem2-j}), we obtain that,  there exists an $\epsilon_0$ such that for any $0<\epsilon<\epsilon_0$,
\begin{equation}\label{proof-lem2-k}
\tau^{\rm d}_{\epsilon}>T.
\end{equation}
Thus by (\ref{proof-lem2-i}) and (\ref{proof-lem2-k}), we obtain that
\begin{equation}\label{proof-lem2-l}
\limsup_{\epsilon\to 0}\sup_{0\leq t\leq T}|\bar{X}^{\rm d}(t)-\bar{X}^{\rm c}(t)|=0.
\end{equation}
Hence (\ref{proof-lem2-a}) holds. The proof is completed.

\subsection{Proof of approximation results (\ref{thm3-a}) of Theorem  \ref{thm3}: approximation for any long time with continuous average system}\label{sec-pro-thm3-a}

Now we prove that for any $\delta>0$,
\begin{align}\label{dis-thm3-b}
\lim_{\epsilon\to0}\inf\{t\geq 0:|X(t)-\bar{X}^{\rm c}(t)|>\delta\}=+\infty \quad\mbox{a.s.}
\end{align}
Define
\begin{align}\label{dis-thm3-c}
\Omega'=\left\{\omega: \limsup_{\epsilon\to 0}\sup_{0\leq
t\leq T}|X(t,\omega)-\bar{X}^{\rm c}(t)|=0,\,\ \forall T\in
\mathbb{N}\right\},
\end{align}
where $X(t,\omega)$($=X(t)$) only makes the dependence on the sample clear.
Then by  Lemma \ref{lemma1}, we have
\begin{align}\label{dis-thm3-d}
P(\Omega')=1.
\end{align}
Let $\delta>0$. For   $\epsilon\in (0,\epsilon_0)$, define a
stopping time $\tau_{\epsilon}^{\delta}$ by
\begin{align}\label{dis-thm3-e}
\tau_{\epsilon}^{\delta}=\inf\{t\geq 0:
|X(t)-\bar{X}^{\rm c}(t)|>\delta\}.
\end{align}
By the fact that $X_0-\bar{X}_0=0$, and the right continuity of
the sample paths of $(X(t)-\bar{X}^{\rm c}(t),t\geq 0)$, we know that $0<\tau_{\epsilon}^{\delta}\leq +\infty$, and if
$\tau_{\epsilon}^{\delta}<+\infty$, then
\begin{align}\label{dis-thm3-f}
|X({\tau_{\epsilon}^{\delta}})-
\bar{X}^{\rm c}({\tau_{\epsilon}^{\delta}})|\geq \delta.
\end{align}
For any $\omega\in\Omega'$, by   (\ref{dis-thm3-c}) and (\ref{dis-thm3-f}),
we get that for any $T\in \mathbb{N}$, there exists
an $\epsilon_0(\omega,\delta,T)>0$ such that for any
$0<\epsilon<\epsilon_0(\omega,\delta,T)$,
\begin{align*}\label{dis-thm3-g}
\tau_{\epsilon}^{\delta}(\omega)>T,
\end{align*}
which implies that
\begin{equation}\label{dis-thm3-h}
\lim_{\epsilon\to 0}\tau_{\epsilon}^{\delta}(\omega)=+\infty.
\end{equation}
Thus it follows from (\ref{dis-thm3-d}) and (\ref{dis-thm3-h}) that
\begin{align*}\label{dis-lem2-i}
\lim_{\epsilon\to 0}\tau_{\epsilon}^{\delta}=+\infty\ a.s.
\end{align*}
The proof is completed.

\subsection{Proof of approximation results (\ref{thm3-b}) of Theorem  \ref{thm3}}\label{sec-pro-thm3-b}
The proof is similar to the proof (Appendixes C and D) of the continuous-time averaging results in \cite{LiuKrsTAC10} by replacing
$X^{\epsilon}_t$ and $\bar{X}_t$ with $X(t)$ and $\bar{X}^{\rm c}(t)$, respectively. The only difference lies in that $X(t)-\bar{X}^{\rm c}(t)$ is right continuous with respect to $t$, while both $X^{\epsilon}_t$ and $\bar{X}_t$ in \cite{LiuKrsTAC10} are continuous.

\subsection{Proof of  Theorem \ref{thm4}: the stability of the continuous-time version (\ref{sys-2}) of the original systems with the continuous average system}\label{sec-pro-thm4}
Since the equilibrium
$\bar{X}^{\rm c}(t)\equiv 0$ of continuous average system (\ref{avesys-20}) is exponentially stable,  there
exist constants $r>0,c>0$ and $\gamma>0$ such that for any $|x|<r$,
\begin{align*}
|\bar{X}^{\rm c}(t)|<c|x|e^{-\gamma t}, \ \ \forall t> 0.
\end{align*}
Thus for any $\delta>0$, we have
\begin{align*}
\left\{|X(t)|>c|x|e^{-\gamma t}+\delta
\right\}\subseteq\left\{|X(t)-\bar{X}^{\rm c}(t)|>\delta\right\},
\end{align*}
which together with  Theorem \ref{thm3} implies that
\begin{align*}
&\lim_{\epsilon\to 0}\ \inf\{t\geq 0:
|X(t)|>c|x|e^{-\gamma t}+\delta\}\cr
&\geq
\lim_{\epsilon\to 0}\ \inf\{t\geq 0:
|X(t)-\bar{X}^{\rm c}(t)|>\delta\}=+\infty\ a.s.
\end{align*}
Hence (\ref{thm4-5}) holds.\bigskip

Let $T(\epsilon)$ be defined in Theorem \ref{thm3}. Thus $\lim_{\epsilon\to 0}T_{\epsilon}=+\infty$. 
Since the equilibrium $\bar{X}^{\rm c}(t)=0$ of the average system is
exponentially stable,  there exist  constants $r>0,c>0$, and $\gamma>0$
such that for any $|x|<r$,
\begin{eqnarray}
|\bar{X}^{\rm c}(t)|<c|x|e^{-\gamma t}, \ \ \ \forall t>0.
\end{eqnarray}
Thus for any $\delta>0$, we have that for any $|x|<r$,
\begin{eqnarray}
&&\left\{\sup_{0\leq t\leq
T(\epsilon)}\left\{|X(t)|-c|x|e^{-\gamma t}\right\}>\delta
\right\}\cr
&&=\bigcup_{0\leq t\leq
T(\epsilon)}\left\{|X(t)|-c|x|e^{-\gamma
t}>\delta \right\}\nonumber\\
&& \subseteq\bigcup_{0\leq t\leq
T(\epsilon)}\left\{|X(t)-\bar{X}^{\rm c}(t)|>\delta\right\}\cr
&&=\left\{\sup_{0\leq t\leq
T(\epsilon)}|X(t)-\bar{X}^{\rm c}(t)|>\delta\right\},
\end{eqnarray}
which together with result (\ref{thm3-b}) of
Theorem \ref{thm3} gives that
\begin{eqnarray}
&&\limsup_{\epsilon\to 0}\ P\left\{\sup_{0\leq t\leq
T(\epsilon)}\left\{|X(t)|-c|x|e^{-\gamma t}\right\}>\delta
\right\}\cr &&\leq \lim_{\epsilon\to 0}P\left\{\sup_{0\leq t\leq
T(\epsilon)}|X(t)-\bar{X}^{\rm c}(t)|>\delta\right\}=0.
\end{eqnarray}
Hence (\ref{thm4-f}) holds. The  proof is
completed.

\subsection{Proof of Theorem \ref{thm5}}\label{sec-pro-thm5}

By using Lemma \ref{lemma2}, we can prove this theorem by following the proof of Theorem \ref{thm3}.
We omit the details.

\subsection{Proof of Theorem \ref{thm6}}\label{sec-pro-thm6}

By using Theorem \ref{thm5}, we can prove this theorem by following the proof of Theorem \ref{thm4}. We omit the details.

\subsection{Proof of Lemma \ref{lem7}}\label{sec-pro-lem7}
By Lemma \ref{lemma2} and the time scale transform, we get
\begin{eqnarray}\label{proof-lem7-a}
&&\limsup_{\epsilon\to 0}\sup_{0\leq k\leq
[N/\epsilon]}|X_k-\bar{X}^{\rm d}_k|\cr
&&=\limsup_{\epsilon\to 0}\sup_{0\leq k\leq
[N/\epsilon]}|X(\epsilon k)-\bar{X}^{\rm d}(\epsilon k)|\cr
&&=\limsup_{\epsilon\to 0}\sup_{0\leq t\leq
N}|X(t)-\bar{X}^{\rm d}(t)|=0\ a.s.
\end{eqnarray}
Hence (\ref{lem7-a}) holds. The proof is completed.

\subsection{Proof of Theorem \ref{thm8}}\label{sec-pro-thm8}

(i) Noticing that $[N/\epsilon]\geq N$ for $\epsilon\leq 1$. Then by Lemma \ref{lem7},  we know that for any natural number $N$,
\begin{eqnarray}\label{proof-thm8-a}
\lim_{\epsilon\to 0}\sup_{0\leq k\leq
N}|X_k-\bar{X}^{\rm d}_k|=0\ a.s.
\end{eqnarray}
By (\ref{proof-thm8-a}) and following the proof of Theorem \ref{thm3}(i), we can prove (\ref{thm8-a}).

(ii) By  Lemma \ref{lem7}, we know that for any natural number $N$, $\sup_{0\leq k\leq
N}|X_k-\bar{X}^{\rm d}_k|$ converges to 0 a.s., and thus it converges to 0 in probability, i.e. (\ref{thm8-b}) holds.

\subsection{Proof of Theorem \ref{thm9}}\label{sec-pro-thm9}

By using Theorem \ref{thm8}, we can prove this theorem by following the proof of Theorem \ref{thm4}. We omit the details.

\end{document}